\newcommand{\Real}{\mathbb R}
\def\bR{\mathbb{R}}
\def\BMO{\text{BMO}}
\def\lec{\lesssim}
\def\gec{\gtrsim}
\def\dist{\text{dist}}
\newcommand{\eqn}[1]{\eqref{e:#1}}
\newcommand{\ps}[1]{\left(#1\right)}
\newcommand{\norm}[1]{\|#1\|}
\newcommand{\abs}[1]{\left\vert#1\right\vert}
\newcommand{\set}[1]{\left\{#1\right\}}
\newcommand{\grad}{\nabla}
\newcommand{\K}{\mathcal{K}}
\newcommand{\Naturals}{\mathbb N}
\def\d{\partial}
\newcommand{\avint}[1]{\mathchoice
{\mathop{\vrule width 6pt height 3 pt depth -2.5pt
\kern -8.8pt \intop}\nolimits_{#1}}%
{\mathop{\vrule width 5pt height 3 pt depth -2.6pt
\kern -6.5pt \intop}\nolimits_{#1}}%
{\mathop{\vrule width 5pt height 3 pt depth -2.6pt
\kern -6pt \intop}\nolimits_{#1}}%
{\mathop{\vrule width 5pt height 3 pt depth -2.6pt \kern -6pt
\intop}\nolimits_{#1}}}
\newtheorem{theorem}{Theorem}
\theoremstyle{definition}
\newtheorem{remark}{Remark}
\theoremstyle{lemma}
\theoremstyle{definition}
\newtheorem{definition}{Definition}
\theoremstyle{lemma}
\newtheorem{lemma}{Lemma}
\begin{document}

\title{Bounded Mean Oscillation and the Uniqueness of Active Scalar Equations}

\author{Jonas Azzam\footnote{\textit{jonasazzam@math.washington.edu}, University of Washington-Seattle} \, and Jacob Bedrossian\footnote{\textit{jacob@cims.nyu.edu}, New York University, Courant Institute of Mathematical Sciences. Partially supported by NSF Postdoctoral Fellowship in Mathematical Sciences, DMS-1103765}}

\date{}

\maketitle

\begin{abstract} 
We consider a number of uniqueness questions for several wide classes of active scalar equations, unifying and generalizing the techniques of several authors \cite{Yudovich63,Yudovich95,BertozziSlepcev10,BertozziBrandman10,BRB10,Rusin11}.
As special cases of our results, we provide a significantly simplified proof to the known uniqueness result for the 2D Euler equations in $L^1 \cap BMO$ \cite{Vishik99} and provide a mild improvement to the recent results of Rusin \cite{Rusin11} for the 2D inviscid surface quasi-geostrophic (SQG) equations, which are now to our knowledge, the best results known for this model.
We also obtain what are (to our knowledge) the strongest known uniqueness results for the Patlak-Keller-Segel models with nonlinear diffusion \cite{Blanchet09,BRB10}. 
 We obtain these results via technical refinements of energy methods which are well-known in the $L^2$ setting but are less well-known in the $\dot{H}^{-1}$ setting. 
The $\dot{H}^{-1}$ method can be considered a generalization of Yudovich's classical method \cite{Yudovich63,Yudovich95,MajdaBertozzi} and is naturally applied to equations such as the Patlak-Keller-Segel models with nonlinear diffusion, and other variants \cite{BertozziBrandman10,BertozziSlepcev10,BRB10}. 
Important points of our analysis are an $L^p$-$BMO$ interpolation lemma and a Sobolev embedding lemma which shows that velocity fields $v$ with $\grad v \in BMO$ are locally log-Lipschitz; the latter is known in harmonic analysis but does not seem to have been connected to this setting. 
\end{abstract}

\section{Introduction} 
The term active scalar refers to a wide variety of nonlinear advection-diffusion equations for a scalar density which, through a nonlocal operator such as an elliptic PDE, gives rise to the advective velocity field (to the authors' knowledge, the terminology is originally due to P. Constantin \cite{Constantin94}, although we are using it for a wider class). 
The purpose of this paper is to study the uniqueness of sufficiently regular weak solutions to such equations in $\Real^d$ for dimensions $d \geq 2$. 
Our main interest will be achieving uniqueness at relatively low regularity and we will not be overly concerned with how quickly our solutions decay at infinity. 
The methods can also be extended to more general domains, for example, bounded Lipschitz domains, as elaborated further in \S\ref{sec:BoundedDomains}.   
 
The best known active scalar equations are the 2D Euler equations ($\nu = 0$) and Navier-Stokes equations ($\nu > 0$) in vorticity form 
\begin{equation*} 
\left\{
\begin{array}{l}
  \omega_t + v\cdot \grad \omega = \nu \Delta \omega, \\
  v = \grad^{\perp}(\Delta)^{-1} \omega. 
\end{array}
\right.
\end{equation*}
The classical proof of uniqueness due to Yudovich \cite{Yudovich63} proves that weak solutions are unique provided 
$\omega(t) \in L^\infty(0,T;L^1(\Real^2)) \cap L^\infty(0,T;L^\infty(\Real^2))$. 
He also developed the classical existence theory for solutions to 2D Euler of this form based on the method characteristics (see also \cite{MajdaBertozzi}).  
Given two weak solutions $\omega_1,\omega_2$ his method estimated the evolution of $\norm{v_1(t) - v_2(t)}_2$ (with the obvious notation).
Measuring $\norm{\omega_1 - \omega_2}_{\dot{H}^{-1}}$, however, provides a natural generalization of Yudovich's method to many other active scalars of interest (see \cite{Robert97,BertozziSlepcev10,BertozziBrandman10,BRB10} and below). 
Later, Yudovich extended his proof to cover cases with unbounded vorticity \cite{Yudovich95}, but with certain restrictions on the growth of $\norm{\omega}_p$ as $p \rightarrow \infty$ which ruled out logarithmic singularities.
In that article, Yudovich also seems to have been the first to note the connection between the Osgood condition for the uniqueness of ODEs and the uniqueness of the 2D Euler equations. 
Vishik \cite{Vishik99} later proved what is to our knowledge the optimal known uniqueness results which include the class $\omega(t) \in L^\infty(0,T;L^1(\Real^2)) \cap L^\infty(0,T;BMO(\Real^2))$. 
His methods, in which of course the Osgood condition plays a role, are based on wavelet decompositions and a precise Littlewood-Paley decomposition of solutions to the 2D Euler equations in standard Eulerian form. 
One of the several results of our article is that Yudovich's significantly simpler method can also be used to prove uniqueness to the 2D Euler equations with $\omega \in L^1 \cap BMO$. 
We emphasize here that we only prove uniqueness in this class, not the more difficult question of existence. 
Ultimately, the reason this is possible rests on the facts that Calder\'on-Zygmund-type singular integral operators map $BMO$ to itself \cite{BigStein} and vector fields $v$ with $\grad v \in BMO$ are locally log-Lipschitz (Lemma \ref{lem:BMO_logLip} below), and hence satisfy the Osgood condition.

The first class of active scalars we study are PDE of the form, 
\begin{equation}
\rho_t + \grad \cdot (\rho V\rho) = \Delta A(\rho), \label{def:Type1} 
\end{equation} 
where $A$ is non-decreasing (potentially zero) and $V$ is a linear operator which is roughly smoothing of order one (e.g. derivatives of solutions to second order elliptic PDEs, $S^{-1}$ pseudo-differential operators etc).
As mentioned above, the most famous of such equations are of course the 2D Euler and Navier-Stokes equations. 
Other examples include the parabolic-elliptic Patlak-Keller-Segel model and related variants \cite{BlanchetEJDE06,Blanchet09,BertozziSlepcev10,BRB10} where one would have for example $V\rho = -\grad(\Delta)^{-1}\rho$, as well as the inviscid aggregation equations \cite{Laurent07,BertozziLaurentRosado10,BertozziBrandman10}. 
As alluded to above, we will essentially apply Yudovich's energy method to these equations, measuring the difference of weak solutions in $\dot{H}^{-1}$. 
This also has the very useful additional benefit of treating nonlinear 
filtration-type diffusion very naturally, which is well-known to be a monotone operator in $H^{-1}$ \cite{VazquezPME}.   
Indeed, the proof applies regardless of how pathological or strongly degenerate the nonlinear diffusion is. 
This technique was applied in \cite{BertozziSlepcev10,BRB10} to Patlak-Keller-Segel models and aggregation equations with porous media type diffusion. 

A second class of active scalar we consider are essentially variants of the 2D surface quasi-geostrophic equations studied by, for example \cite{ConstantinCordobaWu01,Wu05,CaffarelliVasseur06,KiselevNazarov07,Kiselev10,Rusin11},
\begin{equation}
\rho_t + \grad \rho \cdot V\rho = -\nu(-\Delta)^{\gamma}\rho, \label{def:Type2}  
\end{equation}
with $\nu \geq 0$, $\gamma \in (0,1]$ and $V$ is a linear operator of roughly Calder\'on-Zygmund type which is divergence free, i.e. $\grad \cdot V\rho = 0$. 
The 2D SQG equations are the special case given by $V\rho = \grad^\perp (-\Delta)^{-1/2}\rho = (-R_2\rho, R_1\rho)$, where $R_j$ are the standard Riesz transforms. 
To study these equations we measure the difference of weak solutions in $L^2$, but largely we use similar methods and techniques as used for the class \eqref{def:Type1}. Uniqueness results for both \eqref{def:Type1} and \eqref{def:Type2} are discussed below in \S\ref{sec:Type1and2}.  

For inviscid active scalar equations, the authors are not aware of any uniqueness results which extend to problems with velocity fields less regular than those that satisfy the Osgood condition. 
Quantitative stability estimates, such as those here and those in \cite{LoeperVP06,LoeperSG06,Rusin11,Vishik99}, transfer the regularity of the velocity field into stability in time of the norm being measured. 
In the Eulerian framework of an $\dot{H}^{-1}$ estimate here, or in the work of Vishik \cite{Vishik99}, the intuition for this is not immediately apparent, as the modulus of continuity of the velocity field is not explicitly used in the proof. 
However, the Lagrangian methods of Loeper \cite{LoeperVP06,LoeperSG06,CarrilloRosado10,BertozziLaurentRosado10}, which measure stability in the Euclidean Wasserstein distance, make direct use of the Osgood condition as one which ensures that characteristics are well-posed and hence the induced flow map has sufficient stability and regularity properties. 
In contrast, non-uniqueness results for the 2D Euler equations \cite{Scheffer93,Shnirelman97,DeLellisSzekelyhidi08} only apply to problems of far lower regularity than those which satisfy the Osgood condition, leaving a rather wide gap between the cases which are settled.   
The DiPerna-Lions theory of renormalized solutions provides uniqueness for linear transport equations under very weak hypotheses, however, perhaps not coincidentally, these solutions do not seem to have well understood stability properties \cite{DiPernaLions89,AmbrosioIM04}.

For viscous problems, it is well-known that linear diffusion should improve the picture by providing a potentially useful negative term in stability estimates. 
In \S\ref{sec:Dissipation} we show that this can be applied also in the $\dot{H}^{-1}$ method for models of the form \eqref{def:Type1} with $A(\rho)= \rho$. 
As a special case, our results show that $\int_0^t \int \abs{\rho}^2 + \abs{\grad \rho}^2 dx dt < \infty$ is sufficient for uniqueness of weak solutions to \eqref{def:Type1}.

Some of the simplicity of the method arises from the use of scale-invariant (homogeneous) norms: for the first class we estimate stability in $\dot{H}^{-1}$, for the second class, we estimate stability in $L^2$. 
The use of more norms which could treat specific length-scales with more precision, such as Besov-type norms as in \cite{Vishik99}, likely allows one to loosen requirements on the integrability of solutions at infinity and also treat active scalars such that the velocity operator $V$ is of a type in between those above, for example as studied in \cite{ConstantinIyerWu08}. 
All of this, of course, would complicate the proof significantly.

Turning back to the 2D Euler equations as the canonical example, we point out here that the \emph{regularity} implied by $\omega \in BMO$, as opposed to the \emph{integrability}, is very crucial. Lemma \ref{lem:BMO_linear_p} below shows that $\omega \in L^1\cap BMO$ imparts a certain amount of control on the integrability, that is, $\norm{\omega}_p \lesssim p$ as $p \rightarrow \infty$. However, we emphasize that this integrability alone is not enough for the following proof to work. 
Yudovich's proof in \cite{Yudovich95} uses only integrability and requires a stronger restriction on how fast $\norm{\omega}_p$ can be allowed to grow in $p$.
The results in \cite{Kelliher11} indicate that initial data which satisfies Yudovich's restrictions (but are not $BMO$) can result in velocity fields which are not log-Lipschitz (but still Osgood), and hence are in fact less regular than those studied here. 
Hence, it is important to emphasize that simply having logarithmic singularities in the density is not the key idea.   
The regularity conveyed by $BMO$ is used twice in our work: that Calder\'on-Zygmund-type singular integral operators map $BMO$ to itself and that velocity fields with $\grad v \in BMO$ are locally log-Lipschitz (Lemma \ref{lem:BMO_logLip}).

\subsubsection*{Notation and Conventions}
We briefly mention the definitions of weak solution we are using. Our theorems will generally require stronger regularity assumptions than these definitions; these simply provide the baseline. 
We use the following definition of weak solution to \eqref{def:Type1}. 
This notion is stronger than a standard distribution solution, as we must be able to measure the stability of solutions $\dot{H}^{-1}$, which will require taking test functions in $\dot{H}^{1}$. In 2D, a slight modification (suggested in \cite{BertozziBrandman10}) must be made as $\dot{H}^1(\Real^2)$ is not a sensible function space.
 
\begin{definition}[Weak Solution to \eqref{def:Type1}]
If $d\geq 3$ let $\mathcal{V} = \dot{H}^1$ (hence $\mathcal{V}^\star = \dot{H}^{-1}$) and if $d = 2$ then let $\mathcal{V} = \set{f \in L^\infty : \grad f \in L^2}$. 
We say a measurable function $\rho:[0,T]\times \Real^d \rightarrow \Real$ is a weak solution on some time interval $[0,T]$ to \eqref{def:Type1} if $\rho_t \in L^2(0,T;\mathcal{V}^{\star}(\Real^d))$, $\grad A(\rho) \in L^2(0,T;L^2(\Real^d))$, $\rho V\rho \in L^2(0,T;L^2(\Real^d))$ and for all $\phi \in L^2(0,T;\mathcal{V})$ we have, 
\begin{equation*}
\int_0^T <\rho_t,\phi(t)> dt = -\int_0^T \int \left(\grad A(\rho(t)) - \rho(t)V\rho(t)\right) \grad \phi(t) dt. 
\end{equation*}
\end{definition}
For \eqref{def:Type2} we may take the standard notion of distribution solution as we need only to measure stability in $L^2$. 
\begin{definition}[Weak Solution to \eqref{def:Type1}]
We say a measurable function $\rho:[0,T]\times \Real^d \rightarrow \Real$ is a weak solution on some time interval $[0,T]$ to \eqref{def:Type2} if $\rho \in L^2(0,T;L^2)$ and for all $\phi \in C^\infty_c((0,T) \times \Real^d)$ we have, 
\begin{equation*}
\int_0^T \int \rho\left(\phi_t - V\rho \cdot \grad \phi + \nu(-\Delta)^\gamma \phi\right) dx dt = 0.   
\end{equation*}
\end{definition}
We denote the set $\set{u > k} := \set{x\in D: u(x) > k}$, if $S \subset \Real^d$ then $\abs{S}$ denotes the Lebesgue measure and $\mathbf{1}_{S}$ denotes the standard characteristic function.
We denote 
\[f_{A} := \avint{A} f dx=\frac{1}{|A|}\int_{A} f.\] 
We denote the standard Lebesgue spaces $\norm{f}_p = \left(\int \abs{f}^p dx \right)^{1/p}$.
We take the unitary convention of the Fourier transform, 
\begin{equation*}
\hat{f}(\xi) = \frac{1}{(2\pi)^{d/2}}\int f(x)e^{-ix\cdot \xi}d\xi, 
\end{equation*}
and denote the homogeneous Sobolev spaces $\dot{H}^s$ as the closure of the Schwartz space under the norm $\norm{f}_{\dot{H}^{s}} := \norm{\abs{\xi}^s \hat{f}}_{L^2_\xi}$ and the inhomogeneous Sobolev spaces $H^s$ as the closure of the Schwartz space under the norm $\norm{f}_{H^s} := \norm{(1 + \abs{\xi}^2)^{s/2}\hat{f}}_{L^2_\xi}$. 

We use $\mathcal{N}$ to denote the Newtonian potential: 
\begin{equation*}
\mathcal{N}(x) = 
\left\{
\begin{array}{ll}
  \frac{1}{2\pi}\log \abs{x} & d = 2 \\ 
  \frac{\Gamma(d/2 + 1)}{d(d-2)\pi^{d/2}}\abs{x-y}^{2-d} & d \geq 3. 
\end{array}
\right.
\end{equation*}
We will denote space-time norms with the short hand 
\begin{equation*}
\norm{f}_{L^p_tL^q_x} = \norm{f}_{L_t^p(0,T;L_x^q(\Real^d))}, 
\end{equation*}
as the time interval and domain will usually be obvious from context. 
In formulas we use the notation $C(p,k,M,..)$ to denote a generic constant, which may be different from line to line or even term to term in the same computation. 
In general, these constants will depend on more parameters than those listed, for instance those associated with the problem such as $\K$ and the dimension but these
dependencies are suppressed.
We use the notation $f \lesssim_{p,k,...} g$ to denote $f \leq C(p,k,..)g$ where again, dependencies that are not relevant are suppressed. We will also frequently work with dyadic cubes, 
\[\bigcup_{n\in\mathbb{Z}} \left\{ \prod_{i=1}^{d}[a_{i},a_{i}+2^{n}]: a_{i}\in 2^{n}\mathbb{Z}\right\}\]
and the side length of such a cube $Q$ in this collection will be denoted $\ell(Q)$. For $\lambda>0$, we write $\lambda Q$ for the cube of the same center as $Q$ but $\lambda$ times the radius.

\section{Lemmas} \label{sec:Lems}
We begin with a few relatively short lemmas which, while simple, provide the key technical tools for this work. 
The first lemma is essentially a logarithmic variant of Morrey's inequality, allowing one to assert the log-Lipschitz regularity of a vector field based on the gradient having bounded mean oscillation. 
Although, as discussed above, we do not need to make explicit use of this lemma, it reflects probably the most fundamental observation, as this regularity is ultimately connected to the stability necessary to prove the uniqueness theorems below.    
It also shows that for inviscid problems, the Lagrangian proof of Loeper \cite{LoeperVP06} could likely be extended to cover all of the (inviscid) cases considered in Theorem \ref{thm:Hdot1Inviscid} below as well as the Vlasov-Poisson equation (one would only need to extend Theorem 2.9 in \cite{LoeperVP06}). 

\begin{lemma} \label{lem:BMO_logLip}
Let $v \in L^p(\Real^d)$ for some $1 \leq p \leq \infty$ such that $\grad v \in BMO(\Real^d)$ and 
\begin{equation*}
\sup_{x \in \Real^d}\int_{B(x,1)} \abs{\grad v} dz < \infty.  
\end{equation*}
Then $v \in L^\infty$ and for $\abs{x-y}$ sufficiently small (depending on $\sup_{x \in \Real^d}\int_{B(x,1)} \abs{\grad v} dz$ but not on $x,y$), 
\begin{equation*}
\abs{v(x) - v(y)} \lesssim \abs{x-y}\abs{\log\abs{x-y}}\norm{\grad v}_{BMO}.
\end{equation*}
\end{lemma}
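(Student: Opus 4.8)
\emph{Proof plan.} The plan is to work with the precise (Lebesgue‑point) representative of $v$ — legitimate since $v\in L^1_{loc}$ and $\grad v\in BMO\subset L^1_{loc}$, so $v\in W^{1,1}_{loc}$ — and to estimate $\abs{v(x)-v(y)}$ by a dyadic telescoping argument through averages of $v$ over concentric balls, feeding in only two ingredients: the $(1,1)$-Poincar\'e inequality on balls, which bounds $\avint{B}\abs{v-v_B}\,dz$ by $\ell(B)\avint{B}\abs{\grad v}\,dz$ up to a dimensional constant; and the standard $BMO$ fact that for concentric balls $B'\subset B$ of radii $r'\le r$ one has $\abs{(\grad v)_{B'}-(\grad v)_{B}}\lesssim \norm{\grad v}_{BMO}\bigl(1+\log(r/r')\bigr)$, obtained by chaining the trivial estimate $\abs{f_{2B''}-f_{B''}}\le 2^d\norm{f}_{BMO}$ along a dyadic sequence of radii. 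That the resulting pointwise bound upgrades to one valid for all $x,y$, and yields $v\in L^\infty$, will fall out along the way. The only genuinely delicate point is extracting \emph{exactly one} logarithm: a crude use of John--Nirenberg and the $L^q$-Morrey inequality only gives H\"older continuity of exponent $1-d/q$; one must instead track the linear-in-$\log$ growth of the averages of $\abs{\grad v}$ at small scales and sum a series of the form $\sum_j 2^{-j}(c_1 j+c_2)<\infty$.

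First I would record the scale‑by‑scale bound on the averages of $\abs{\grad v}$. Writing $M:=\sup_{z}\int_{B(z,1)}\abs{\grad v}\,dw$, one has $\abs{(\grad v)_{B(x,1)}}\le \abs{B(0,1)}^{-1}M=:c_dM$ uniformly in $x$, so the $BMO$ chaining estimate above gives, for every $x$ and every $0<\rho\le 1$,
\begin{equation*}
\avint{B(x,\rho)}\abs{\grad v}\,dz \;\le\; \avint{B(x,\rho)}\abs{\grad v-(\grad v)_{B(x,\rho)}}\,dz + \abs{(\grad v)_{B(x,\rho)}-(\grad v)_{B(x,1)}} + \abs{(\grad v)_{B(x,1)}} \;\lesssim\; \norm{\grad v}_{BMO}\bigl(1+\abs{\log\rho}\bigr)+M .
\end{equation*}
This is the only place where the hypotheses on $\grad v$ are used.

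Next, for a Lebesgue point $x$ of $v$ and a radius $\rho_0\le 1$, the Lebesgue differentiation theorem gives $v(x)=\lim_{j\to\infty}v_{B(x,2^{-j}\rho_0)}$, hence
\begin{equation*}
\abs{v(x)-v_{B(x,\rho_0)}}\le \sum_{j\ge0}\abs{v_{B(x,2^{-j-1}\rho_0)}-v_{B(x,2^{-j}\rho_0)}}\;\lesssim\;\sum_{j\ge0}\avint{B(x,2^{-j}\rho_0)}\abs{v-v_{B(x,2^{-j}\rho_0)}}\,dz\;\lesssim\;\sum_{j\ge0}2^{-j}\rho_0\avint{B(x,2^{-j}\rho_0)}\abs{\grad v}\,dz ,
\end{equation*}
using $\abs{v_{B'}-v_B}\le(\abs{B}/\abs{B'})\avint{B}\abs{v-v_B}\,dz$ for $B'\subset B$ and then Poincar\'e. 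Inserting the average bound from the previous step and summing the convergent series $\sum_j 2^{-j}(1+j+\abs{\log\rho_0})$ yields $\abs{v(x)-v_{B(x,\rho_0)}}\lesssim \rho_0\bigl(\norm{\grad v}_{BMO}(1+\abs{\log\rho_0})+M\bigr)$. Taking $\rho_0=1$ and combining with $\abs{v_{B(x,1)}}\lesssim\norm{v}_p$ (H\"older) shows $v\in L^\infty$, with $\norm{v}_\infty\lesssim\norm{v}_p+\norm{\grad v}_{BMO}+M$; in particular $v$ has a bounded, and (by the next step) continuous, representative.

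Finally, for Lebesgue points $x,y$ with $r:=\abs{x-y}$ small (so $r\le 1/3$ in particular), set $B=B(x,2r)$ and $\widetilde B=B(y,3r)\supseteq B$; then
\begin{equation*}
\abs{v(x)-v(y)}\le \abs{v(x)-v_{B}}+\abs{v_{B}-v_{\widetilde B}}+\abs{v_{\widetilde B}-v(y)} ,
\end{equation*}
where the first and third terms are controlled by the telescoping estimate above (with $\rho_0=2r$, $3r$), and the middle term by one application of Poincar\'e on $\widetilde B$ (using $\abs{\widetilde B}/\abs{B}\lesssim 1$) together with the average bound; each term contributes $\lesssim r\bigl(\norm{\grad v}_{BMO}(1+\abs{\log r})+M\bigr)$. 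Since this holds for a.e.\ pair and the right side is continuous, it holds for all $x,y$ after modification on a null set. If $\norm{\grad v}_{BMO}=0$ then $\grad v$ is a.e.\ constant and the $L^p$ bound forces $v$ constant, so the claim is trivial; otherwise, for $\abs{x-y}$ below a threshold depending only on $d$ and $M/\norm{\grad v}_{BMO}$, the terms $1$ and $M$ are dominated by $\abs{\log r}\norm{\grad v}_{BMO}$, giving $\abs{v(x)-v(y)}\lesssim\abs{x-y}\,\abs{\log\abs{x-y}}\,\norm{\grad v}_{BMO}$. As noted, the only real obstacle is the single‑logarithm bookkeeping in the series; the rest is the routine interplay of Poincar\'e's inequality and the dyadic $BMO$ estimate.
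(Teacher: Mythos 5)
Your proof is correct. The engine is the same as the paper's -- a dyadic decomposition of scales combined with the standard $BMO$ drift estimate $\abs{f_{B_{k}}-f_{B_{1}}}\lesssim k\norm{f}_{BMO}$, leading to a convergent series of the form $\sum_k k2^{-k}$ that produces exactly one logarithm -- but the reduction to that engine differs. The paper passes from $\abs{v(x)-v(y)}$ to the Riesz potential $\int_{B(x,r)}\abs{\grad v(z)}\abs{x-z}^{1-d}\,dz$ via a fundamental-theorem-of-calculus inequality, which forces it to assume $v\in\mathcal{S}$ and invoke density; you instead telescope the averages $v_{B(x,2^{-j}\rho_0)}$ using the $(1,1)$-Poincar\'e inequality on balls. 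This buys you a proof that operates directly on the Lebesgue-point representative of a $W^{1,1}_{loc}$ function with no mollification step, and it yields the quantitative bound $\norm{v}_\infty\lesssim\norm{v}_p+\norm{\grad v}_{BMO}+M$ as an explicit byproduct rather than an afterthought. One cosmetic remark: your smallness threshold for $\abs{x-y}$ depends on $M/\norm{\grad v}_{BMO}$ rather than on $M$ alone, but the paper's own absorption of the $f_{B_1}C(d)r$ term carries exactly the same implicit dependence, so this is not a discrepancy with the lemma as actually proved.
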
  

This lemma is known in harmonic analysis (c.f. \cite[Theorem A.2, Proposition A.3]{mcmullen1996renormalization}). In the aforementioned reference, for example, it is used to establish that quasiconformal vector fields are log-Lipschitz. A standard proof involves first showing that a function satisfying the conditions of the lemma is in the Zygmund class, whose functions are log-Lipschitz continuous. For the reader's convenience, however, we supply a self-contained proof in the appendix.\\

The John-Nirenberg inequality \cite{BigStein} is one classical way to quantify the idea that functions in $BMO$ have at most logarithmic singularities and in particular, asserts that any $f \in BMO$ is in $L_{loc}^p$. 
The following interpolation-type lemma provides a slight variant of this idea which will prove to be a simple but very important technical tool.
In order to treat bounded domains, we will also prove the lemma in the natural class of so-called uniform domains. 
\begin{definition} 
A domain $\Omega$ is a {\it C-uniform domain} if, for any points $x,y\in \Omega$, there is a curve $\gamma\subseteq \Omega$ connecting $x$ and $y$ with $\text{length}(\gamma)\leq C|x-y|$ and such that for all $z\in \gamma$, $B(z,\frac{\min\{|x-z|,|y-z|\}}{C})\subseteq \Omega.$
\label{d:uniform-domain}
\end{definition}
There are several different equivalent definitions of uniform domains in the literature (see \cite{Vaisala88}), but this one will suffice for our purposes. Visually, a uniform domain is one where any pair of points $x$ and $y$ have a crescent shape\footnote{A ``cigar" shape is the more common albeit misleading terminology in the literature.} of bounded eccentricity contained in $\Omega$ whose corners are $x$ and $y$. Moreover, it is straightforward to confirm that  Lipschitz domains are uniform. The class of $C$-uniform domains is natural here due to the extension theorem of Peter Jones \cite{Jones80}, which we will exploit in proving the following lemma. 

\begin{lemma} \label{lem:BMO_linear_p}
Let $f \in L^{p_0} \cap BMO(\Omega)$, where $\Omega$ is a $C$-uniform domain, $p_0 < \infty$. Then, for $p$, $p_0 < p < \infty$, 
\begin{equation}
||f||_{p} \lec_{d,p_{0}} p^{1-\frac{p_{0}}{p}} ||f||_{BMO}^{1-\frac{p_{0}}{p}}||f||_{p_{0}}^{\frac{p_{0}}{p}}.
\label{e:p<bmop0}
\end{equation}
In particular, for large $p$, the $L^p$ norm grows at most linearly as $p \rightarrow \infty$. 
\end{lemma}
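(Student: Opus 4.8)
The plan is to reduce to $\Omega=\Real^d$ and then prove a refined John--Nirenberg estimate for the distribution function of $f$ that records \emph{both} its $BMO$ oscillation and its $L^{p_0}$ mass; integrating that estimate against $\lambda^{p-1}\,d\lambda$ produces a gamma function whose $p$-th root is exactly the factor $p^{1-p_0/p}$. For the reduction, since $\Omega$ is $C$-uniform, Peter Jones's extension theorem \cite{Jones80} gives an extension $\tilde f$ of $f$ to $\Real^d$ with $\norm{\tilde f}_{BMO(\Real^d)}\lesssim\norm{f}_{BMO(\Omega)}$, and the same Whitney-cube based operator is bounded on $L^{p_0}$, so also $\norm{\tilde f}_{L^{p_0}(\Real^d)}\lesssim\norm{f}_{L^{p_0}(\Omega)}$ (the implicit constants depending on $d$ and the uniformity constant $C$). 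Since $\norm{f}_{L^p(\Omega)}\leq\norm{\tilde f}_{L^p(\Real^d)}$, it suffices to prove \eqref{e:p<bmop0} on $\Real^d$. (Alternatively one can run the whole argument on $\Omega$ directly, using the chains of cubes supplied by Definition~\ref{d:uniform-domain}, but quoting the extension is cleaner.)

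The core claim is that, writing $M=\norm{f}_{BMO}$ and $d_f(\lambda)=\abs{\set{|f|>\lambda}}$,
\[ d_f(\lambda)\lesssim_{d,p_0}\lambda^{-p_0}\,\norm{f}_{p_0}^{p_0}\,e^{-c_d\lambda/M},\qquad \lambda>0, \]
for some $c_d>0$. To see this, fix $\lambda$ and run a Calder\'on--Zygmund stopping-time decomposition of $\abs{f}^{p_0}\in L^1(\Real^d)$ at height $\beta=2^{-d}(\lambda/2)^{p_0}$: this yields disjoint dyadic cubes $\set{Q_i}$ with $\beta<\avint{Q_i}\abs{f}^{p_0}\,dx\leq 2^d\beta$ and with $\abs{f}\leq\beta^{1/p_0}<\lambda$ a.e.\ off $\bigcup_iQ_i$. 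Then $\set{|f|>\tfrac{3}{2}\lambda}\subseteq\bigcup_iQ_i$ up to a null set, $\sum_i\abs{Q_i}\leq\beta^{-1}\norm{f}_{p_0}^{p_0}=2^d(2/\lambda)^{p_0}\norm{f}_{p_0}^{p_0}$, and on each $Q_i$ the average satisfies $\abs{f_{Q_i}}\leq(2^d\beta)^{1/p_0}=\lambda/2$, whence $\set{x\in Q_i:|f(x)|>\tfrac{3}{2}\lambda}\subseteq\set{x\in Q_i:|f(x)-f_{Q_i}|>\lambda}$. Applying John--Nirenberg \cite{BigStein} on each $Q_i$ and summing over $i$ gives $d_f(\tfrac{3}{2}\lambda)\lesssim_d e^{-c\lambda/M}\sum_i\abs{Q_i}$, which after rescaling $\tfrac{3}{2}\lambda\mapsto\lambda$ is the claim. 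I expect this step to require the most care: the point is that the $L^{p_0}$ norm fixes the length scale $\beta^{1/p_0}\sim\lambda$ at which the stopping time ``sees'' the level set $\set{|f|>\lambda}$, while $BMO$ supplies the exponential gain on each stopping cube — neither ingredient alone is enough, and the scale-invariance of \eqref{e:p<bmop0} is precisely a reflection of this coupling.

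Finally I would integrate. From $\norm{f}_p^p=p\int_0^\infty\lambda^{p-1}d_f(\lambda)\,d\lambda$, using the Chebyshev bound $d_f(\lambda)\leq\lambda^{-p_0}\norm{f}_{p_0}^{p_0}$ for $\lambda\leq\Lambda$ and the estimate above for $\lambda>\Lambda$,
\[ \norm{f}_p^p\lesssim_{d,p_0}p\,\norm{f}_{p_0}^{p_0}\left(\frac{\Lambda^{p-p_0}}{p-p_0}+\Gamma(p-p_0)\,(M/c_d)^{p-p_0}\right). \]
Taking $\Lambda=pM$ and using $\Gamma(p-p_0)\leq p^{p-p_0}$ balances the two terms and gives $\norm{f}_p^p\lesssim_{d,p_0}p^{\,p-p_0+1}\norm{f}_{p_0}^{p_0}M^{p-p_0}$ for $p\geq 2p_0$; taking $p$-th roots and absorbing $p^{1/p}\leq 2$ yields exactly \eqref{e:p<bmop0}. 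The leftover range $p_0<p<2p_0$ then follows by the interpolation inequality $\norm{f}_p\leq\norm{f}_{p_0}^{1-\theta}\norm{f}_{2p_0}^{\theta}$ with $\theta=2(1-p_0/p)\in(0,1)$, inserting the already-established case $p=2p_0$. A secondary technical point, worth checking but routine for the bounded Lipschitz domains of primary interest, is that the Jones extension can be taken to control the $BMO$ and $L^{p_0}$ norms simultaneously.
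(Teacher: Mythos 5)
Your argument for the core case $\Omega=\Real^d$ is correct but follows a genuinely different route from the paper's. The paper splits $f=f_1+f_2$ at a single height $\alpha=\norm{f}_{p_0}$ ($f_1$ bounded by $\alpha$, $f_2$ supported on the maximal Calder\'on--Zygmund cubes), estimates each piece in $L^p$ directly using the $L^p$-form of John--Nirenberg, $\avint{Q}\abs{f-f_Q}^p\lesssim_d(p\norm{f}_{BMO})^p$, arrives at the additive bound $\norm{f}_p\lesssim p\norm{f}_{BMO}+\norm{f}_{p_0}$, and only then converts it to the multiplicative form \eqref{e:p<bmop0} by a dilation argument exploiting scale invariance. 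You instead prove, for each level $\lambda$, a refined distribution estimate $d_f(\lambda)\lesssim_{d,p_0}\lambda^{-p_0}\norm{f}_{p_0}^{p_0}e^{-c_d\lambda/\norm{f}_{BMO}}$ by stopping on $\abs{f}^{p_0}$ at height $\sim\lambda^{p_0}$ and applying the exponential John--Nirenberg inequality on each stopping cube, then integrate against $\lambda^{p-1}\,d\lambda$ and optimize the cut $\Lambda=p\norm{f}_{BMO}$. I have checked the stopping-time step (the bounds $\abs{f_{Q_i}}\le\lambda/2$, $\sum\abs{Q_i}\le\beta^{-1}\norm{f}_{p_0}^{p_0}$, and the inclusion into $\set{\abs{f-f_{Q_i}}>\lambda}$ all work), and the $\Gamma$-function bookkeeping for $p\ge 2p_0$ together with the interpolation patch for $p_0<p<2p_0$ is sound. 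Your route produces the multiplicative inequality in one pass, at the price of a level-by-level decomposition; the paper's route is cruder pointwise but gets the sharp $p$-dependence from the elegant scaling step. Both are legitimate.

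The one genuine problem is in your reduction to $\Real^d$ for \emph{bounded} uniform domains, which you dismiss as a routine check. The claim that the Jones extension operator satisfies $\norm{\tilde f}_{L^{p_0}(\Real^d)}\lesssim\norm{f}_{L^{p_0}(\Omega)}$ is false there: when $\Omega$ is bounded, every sufficiently distant Whitney cube $Q'$ of $\Omega^c$ must be assigned a fixed large interior cube $Q_0$, so $\tilde f$ equals the constant $f_{Q_0}$ on an unbounded set and is not in $L^{p_0}(\Real^d)$ at all unless $f_{Q_0}=0$. The paper repairs exactly this by extending $g=f-f_{Q_0}$ instead (for which $\abs{\tilde g}\lesssim Mg$ does hold), reinstating the constant at the end, and controlling its contribution to the $L^p$ and $L^{p_0}$ norms via the comparability $Q_0\subseteq\Omega\subseteq C'Q_0$ and the normalization $\abs{\Omega}^{1/p}=\abs{Q_0}^{1/p_0}$. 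Your proof needs this same device (or an equivalent one) inserted before the reduction is valid; with it, the rest of your argument goes through. A smaller point, shared with the paper's own write-up: passing the $L^{p_0}$ bound through the maximal function requires $p_0>1$ for the strong maximal inequality, so the endpoint $p_0=1$ needs the weak-type bound or a separate remark.
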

\begin{proof}
We first prove the lemma in the case of $\Omega=\Real^d$ where the proof proceeds naturally. 
We will use the Calder\'on-Zygmund decomposition which splits $f$ into a sum of two parts: a ``good" part that is uniformly bounded, which we will bound using the $p_{0}$-norm of $f$,  and a ``bad" part which, though not bounded, has mean zero locally and may be bounded by the BMO norm of $f$ (see \cite[I.4]{BigStein}).

Without loss of generality, we may assume $f$ has compact support. Define the dyadic maximal function 
\begin{equation*}
Mf(x) = \sup_{x \in Q} \frac{1}{\abs{Q}}\int_Q \abs{f}(y) dy, 
\end{equation*}
where the supremum is taken over all dyadic cubes $Q$ containing $x$.

Let  $\alpha>0$ to be chosen later, $E_{\alpha}=\{Mf>\alpha\}$, $f_{1}=f\chi_{E_\alpha^{c}}$, $f_{2}=f-f_{1}$. 
Note by the Lebesgue differentiation theorem $f_1 \leq \alpha$. 
Then
\begin{equation}
||f_{1}||_{p}^{p} \leq \alpha^{p-p_{0}}||f||_{p_{0}}^{p_{0}}.\label{ineq:f_1}
\end{equation}
Let $\{Q_{k}\}$ be the collection of maximal dyadic cubes with disjoint interiors such that
\[\avint{Q_{k}}|f|>\alpha.\]
Each $x\in E_{\alpha}$ is certainly contained in such a cube $Q$ by the definition of $E_{\alpha}$, and that cube must necessarily be contained in $E_{\alpha}$,  for otherwise, there is $y\in Q\cap E_{\alpha}^{c}$, and by definition of $Mf(y)$, $Mf(y) > \alpha$, a contradiction.
This maximality also implies 
\[\avint{Q_{k}}|f|\lesssim_{d} \alpha.\]
Since $f$ is in BMO, we also have (see \cite[IV.1.3 (13)]{BigStein}) 
\[\avint{Q_{k}} |f-f_{Q_{k}}|^{p}\lesssim_{d} \left(p\norm{f}_{BMO}\right)^{p}.\]
We now estimate
\begin{align*}
\int|f_{2}|^{p} 
& = \sum_{k}\int_{Q_{k}}|f|^{p}\\ & 
\leq 2^{p}\sum_{k}\left( \int_{Q_{k}}|f-f_{Q_k}|^{p}+\int_{Q_{k}}|f_{Q_{k}}|^{p}\right) \\ & 
 \lesssim_{d} 2^{p}\sum_{k}\left(p\norm{f}_{BMO})^{p}|Q_{k}|+\alpha^{p}|Q_{k}|\right) \\ & 
 =2^{p}\left( (p\norm{f}_{BMO})^{p}+\alpha^{p})|\{Mf>\alpha\}|\right) \\ & 
 \lesssim_{p_0,d} 2^{p}((p\norm{f}_{BMO})^{p}+\alpha^{p})\frac{||f||_{p_{0}}^{p_{0}}}{\alpha^{p_{0}}},
\end{align*}
where in the last line we used the Chebichev and Hardy-Littlewood maximal inequalities in showing
\[|\{Mf>\alpha\}|\leq \frac{||Mf||_{p_{0}}^{p_{0}}}{\alpha^{p_{0}}}\lesssim_{p_0,d}\frac{||f||_{p_{0}}^{p_{0}}}{\alpha^{p_{0}}}.\]
We now let $\alpha=||f||_{p_{0}}$ and obtain
\begin{equation} \int |f_{2}|^{p} \lesssim_{d} 2^{p} ((p\norm{f}_{BMO})^{p}+||f||_{p_{0}}^{p})
\label{ineq:f_{2}} 
\end{equation}
Note that \eqref{ineq:f_1} implies $||f_{1}||_{p}^{p}\leq ||f||_{p_{0}}^{p}$. Adding this estimate to \eqref{ineq:f_{2}} and taking $p$th-roots of both sides gives
\begin{equation}
\norm{f}_p \lesssim_{d,p_{0}} p\norm{f}_{BMO} + \norm{f}_{p_0}
\label{e:bmo+p0}
\end{equation}
Now, since this holds for all functions $f\in BMO\cap L^{p_{0}}$ and this class is scale and dilation invariant, we can scale the argument of $f$ by a factor $\delta$ and the inequality will still hold. Let $f_{\delta}(x)=f(\delta x)$. The effect on the norms is
\[||f_{\delta}||_{p}=\delta^{-\frac{d}{p}}||f||_{p}, \;\; ||f_{\delta}||_{p_{0}}=\delta^{-\frac{d}{p_{0}}}||f||_{p_{0}}, \;\; \mbox{ and } \;\; ||f_{\delta}||_{BMO}=||f||_{BMO}.\]
Hence, letting $\eta=\delta^{-\frac{1}{d}}$, we have that for all $\eta$, 
\[\eta^{\frac{1}{p}} \norm{f}_p \lesssim_{d,p_{0}} p\norm{f}_{BMO} + \eta^{\frac{1}{p_{0}}} \norm{f}_{p_0}.\]
Choosing $\eta$ such that
\[\eta=\ps{\frac{p||f||_{BMO}}{||f||_{p_{0}}}}^{p_{0}},\]
implies, 
\begin{align*}
\norm{f}_p  & 
\lesssim_{d,p_{0}} \eta^{-\frac{1}{p}}(p\norm{f}_{BMO} + \eta^{\frac{1}{p_{0}}} \norm{f}_{p_0})\\ & 
 = 2\eta^{-\frac{1}{p}}p||f||_{BMO} \\ & 
 =2\ps{\frac{p||f||_{BMO}}{||f||_{p_{0}}}}^{-\frac{p_{0}}{p}} p||f||_{BMO} \\ & 
 =2p^{1-\frac{p_{0}}{p}} ||f||_{BMO}^{1-\frac{p_{0}}{p}}||f||_{p_{0}}^{\frac{p_{0}}{p}}.
\end{align*}

We now begin the process of generalizing the lemma to $C$-uniform domains, but first focus on the case of {\it unbounded domains}. The proof of $\Real^d$ does not translate immediately to this setting; if we pick $\alpha>0$ and maximal cubes $Q_{j}$ in $\Omega$ as before, the cubes may be maximal merely because their parent cubes are not contained in $\Omega$, and hence we cannot guarantee $\avint{Q_{j}}|f|\sim \alpha$. To overcome this issue, we employ an extension theorem of Peter Jones to reduce to the $\bR^{d}$ case.
\begin{theorem}[\cite{Jones80}] If $\Omega$ is a $C$-uniform domain, there is a bounded linear extension operator from $\BMO(\Omega)$ onto $\BMO(\bR^{d})$ whose norm depends quantitatively on $d$ and $C$.
\label{t:Jones}
\end{theorem}
We can actually write down the extension operator explicitly. First, let $W=W(\Omega)$ be the Whitney cube decomposition for the domain $\Omega$ (c.f. \cite[p. 16]{LittleStein}). The cubes $W$ have disjoint interiors, $\bigcup_{Q\in W}Q=\Omega$, and satisfy that for all $z\in Q\in W$, 
\begin{equation}\ell(Q)\sim_{d} \dist(Q,\d \Omega)\sim_{d} \dist (z,\d\Omega)
\label{e:whitney}
\end{equation}
with constants independent of $\Omega$ (such a collection exists for any open set $\Omega$).

Let $W'$ be a Whitney decomposition for $(\Omega^{c})^{\circ}$, and for each $Q'\in W'$, assign to it the closest cube $T(Q')\in W$ such that $\ell(T(Q'))\geq \ell(Q')$. If $\Omega$ is an {\it unbounded} $C$-uniform domain, then we may always find such a $T(Q')$. To see this, pick $x,y\in \Omega$ and connect then by a curve satisfying the properties in Definition \ref{d:uniform-domain}. Pick $z\in \gamma$ such that $\min\{|x-z|,|y-z|\}\geq \frac{|x-y|}{2}$. Then $z$ is contained in a Whitney cube $Q$ such that 
\[\dist(Q,\d\Omega)\sim_{d} \dist(z,\d\Omega)\gec |x-y|.\] 
Thus, if $\Omega$ is unbounded, we may pick $x$ and $y$ so that $|x-y|$ is as large as we like, and hence, for any $Q'\in W'$, we may always find $T(Q')\in W$ with $\ell(T(Q'))\geq \ell(Q')$.

Now we can write down the extension. For $f\in \BMO(\Omega)$, define (c.f. \cite[p. 54-57]{Jones80})
\begin{equation}
\tilde{f}= f+\sum_{Q'\in W'}\avint{T(Q')}f.
\label{e:ftwid}
\end{equation}
We claim that there is $C''$ depending only $C$ and $d$ so that
\begin{equation}
T(Q')\subseteq C''Q'\mbox{ and } Q'\subseteq C''T(Q') \mbox{ whenever } \ell(T(Q'))\geq\ell(Q').
\label{e:C''}
\end{equation}
By Lemma 2.10 in \cite{Jones80}, we know 
\begin{equation}
\mbox{ if }\ell(T(Q'))\geq \ell(Q'),\mbox{ then } d(Q',T(Q'))\lesssim_{C,d} \ell(Q')\leq \ell(T(Q')).
\end{equation} 
(which holds regardless of whether $\Omega$ is bounded or unbounded) and by virtue of $T(Q')$ and $Q'$ being Whitney cubes in $\Omega$ and $(\Omega^{c})^{\circ}$ respectively, we also know $\ell(T(Q'))\sim_{C,d}\ell(Q)$, which implies \eqn{C''}. 

Hence, if $x\in Q'\in W'$,
\begin{equation}
|\tilde{f}(x)|\leq \avint{T(Q')}|f|\lec_{C'',d} \avint{C''Q'}|f|\lec_{d} Mf(x),
\label{e:Mf}
\end{equation}
where, by an abuse of notation, $Mf$ now denotes the uncentered maximal function (that is, $Mf(x)$ is the supremum of averages over {\it all} cubes containing $x$, not just dyadic cubes). Moreover, we are considering $f$ to be defined over $\Real^d$ with $f =0$ in $\Omega^{c}$. 
Hence, by Theorem \ref{t:Jones}, \eqn{p<bmop0} (which we have proved for functions in $\BMO(\bR^{d})$), and the Hardy-Littlewood maximal inequality,
\begin{align*} 
||f||_{L^p(\Omega)} \leq ||\tilde{f}||_{L^p(\Real^d)} &
\stackrel{\eqn{p<bmop0}}{\lec}_{p_{0},d} p^{1-\frac{p_{0}}{p}}||\tilde{f}||_{\BMO(\bR^{d})}^{1-\frac{p_{0}}{p}}||\tilde{f}||_{L^{p_{0}}(\Real^d)}^{\frac{p_{0}}{p}} \\ 
& \stackrel{\eqn{Mf}}{\lec}_{C,d} p^{1-\frac{p_{0}}{p}}||f||_{\BMO(\Omega)}^{1-\frac{p_{0}}{p}}||Mf||_{L^{p_{0}}(\Real^d)}^{\frac{p_{0}}{p}}
\lec_{d,p_{0}}||f||_{\BMO(\Omega)}^{1-\frac{p_{0}}{p}}||f||_{L^{p_{0}}(\Real^d)}^{\frac{p_{0}}{p}}.
\end{align*}

We now prove Lemma \ref{lem:BMO_linear_p} in the case of a {\it bounded} $C$-uniform domain. In this setting (again, citing the construction in \cite{Jones80}), the extension $\tilde{f}$ is defined in a similar way to before, but we need to address the fact that for any cube $Q'$ there might not be a cube $T(Q')\in \Omega$ with $\ell(T(Q'))\geq \ell(Q')$ (recall that, before, the unboundedness of $\Omega$ played a role in showing that one existed). 

First, we claim that there is $C'$ depending only on $C$ and $d$ so that any bounded $C$-uniform domain $\Omega$ contains a Whitney cube $Q_{0}$ so that 
\begin{equation} Q_{0} \subseteq \Omega \subseteq C'Q_{0}\label{e:C'}\end{equation}
To see this, pick $x,y\in\Omega$ so that $|x-y|\geq \frac{\ell(\Omega)}{2}$. Then, just as we did earlier, we can find $Q_{0}\subseteq \Omega$ such that 
\[\ell(Q_{0})\gec_{C,d}|x-y|\gec \ell(\Omega),\] 
which proves the claim. 

Next, the proof is simpler if we scale $\bR^{d}$ by a factor $t=\ps{\frac{|\Omega|^{\frac{1}{p}}}{|Q_{0}|^{\frac{1}{p_{0}}}}}^{d\ps{\frac{1}{p_{0}}-\frac{1}{p}}}$ so that $|t\Omega|^{\frac{1}{p}}=|tQ_{0}|^{\frac{1}{p_{0}}}.$ 
Notice \eqn{p<bmop0} is invariant under such dilations, and $t\Omega$ remains $C$-uniform with the same constant $C$. It is easy to verify that the collection $tW=\{tQ:Q\in W\}$ still satisfies the properties of a Whitney cube decomposition for $t\Omega$ (with constants depending only on $d$ and not on $t$). To simplify notation, we will suppress the value $t$ below, or assume without loss of generality that $t=1$, and henceforth assume that 
\begin{equation}
|\Omega|^{\frac{1}{p}}=|Q_{0}|^{\frac{1}{p_{0}}}.
\label{e:weird-dilation}
\end{equation}

Now for each $Q'\in W'$, assign to it a cube $T(Q')\in W$ closest to $Q'$ such that $\ell(T(Q'))\geq \ell(Q')$ if such a cube exists; if not, choose $T(Q')=Q_{0}$.

Let $f\in \BMO(\Omega)$, and define $\tilde{f}$ exactly as in \eqn{ftwid} (which the association $Q'\rightarrow T(Q')$ just defined). Applying Theorem \ref{t:Jones} directly to $f$ would again immediately imply \eqn{p<bmop0} if we knew $||\tilde{f}||_{p_{0}}\lesssim ||f||_{p_{0}}$, but $\tilde{f}$ is constant and potentially nonzero outside far away from $\Omega$. To remedy this, we instead consider $g=f-f_{Q_{0}}$ extended to be zero in $\Omega^c$ and our first goal is to show that for all $x\in\bR^{d}$,
\begin{equation}
\abs{\tilde{g}(x)} \lec_{C,d} Mg(x).
\label{e:Mg}
\end{equation}
Establishing this estimate is similar to \eqref{e:Mf}.
Let $x\in Q'\in W'$. If $T(Q')<\ell(Q')$, then $T(Q')=Q_{0}$ and \eqn{Mg} is trivial since $\tilde{g}(x)=0$. Otherwise,  $T(Q')\geq \ell(Q')$ and we may apply \eqn{C''} and proceed as we did in \eqn{Mf} to obtain
\[|\tilde g(x)|\leq \avint{T(Q')}|g|\lesssim_{C'',d} \avint{B(x,C''\ell(Q'))}|g|\leq Mg(x),\]
which proves \eqn{Mg}.

Now we may proceed:
\begin{align}
||\tilde{g}||_{L^{p_{0}}(\Real^d)} 
\stackrel{\eqn{Mg}}{\lesssim} ||Mg||_{L^{p_{0}}(\Real^d)}
\lesssim_{p_{0},d}||g||_{L^{p_{0}}(\Omega)}
& \leq ||f||_{L^{p_{0}}(\Omega)}+|\Omega|^{\frac{1}{p_{0}}}\avint{Q_{0}}|f|\\ & 
\leq ||f||_{L^{p_{0}}(\Omega)} + |\Omega|^{\frac{1}{p_{0}}}\ps{\avint{Q_{0}}|f|^{p_{0}}}^{\frac{1}{p_{0}}}\\ & 
\leq  ||f||_{L^{p_{0}}(\Omega)} + \ps{\frac{|\Omega|}{|Q_{0}|}}^{\frac{1}{p_{0}}}\ps{\int_{Q_{0}}|f|^{p_{0}}}^{\frac{1}{p_{0}}}\\ & 
\stackrel{\eqn{C'}}{\lesssim}_{C',d} ||f||_{L^{p_{0}}(\Omega)}.
\label{e:gtwid-p0}
\end{align}
Thus,
\begin{align*}
||f||_{L^p(\Omega)}& 
\leq ||g||_{L^p(\Omega)}+|\Omega|^{\frac{1}{p}} \avint{Q_{0}}|f|\\ & 
\leq ||\tilde{g}||_{L^p(\Real^d)} +|\Omega|^{\frac{1}{p}}\ps{\avint{Q_{0}}|f|^{p_{0}}}^{\frac{1}{p_{0}}}\\ & 
\stackrel{\eqn{weird-dilation},\eqref{e:bmo+p0}}{\lesssim_{d,p_0}} p||\tilde{g}||_{\BMO(\bR^{d})}|| + ||\tilde{g}||_{L^{p_{0}}(\Real^d)} + ||f||_{L^{p_{0}}(\Omega)}\\ & 
\stackrel{\eqn{gtwid-p0}}{\lesssim}_{C',d} p||g||_{\BMO(\Omega)}+  ||f||_{L^{p_{0}}(\Omega)}\\ & 
=p||f||_{\BMO(\Omega)}+||f||_{L^{p_{0}}(\Omega)}
\end{align*}
where in the last line we used the fact that the BMO norm of a function is unaffected by adding or subtracting a constant (in this case, the constant is $f_{Q_{0}}$). The penultimate line used Theorem \ref{t:Jones}. 
We have now arrived at \eqn{bmo+p0}, and from here we may proceed as before in the $\Real^d$ case. 
\end{proof}

The next lemma provides straightforward sufficient conditions to be in the Hardy space $\mathcal{H}^1$, which arises when treating the $\Real^2$ case in Theorem \ref{thm:Hdot1Inviscid} below. It is not relevant for the treatment of bounded domains. 
The lemma is also general enough to find use elsewhere \cite{BR11}. 

\begin{lemma} \label{lem:Hardy}
Let $f \in L^1 \cap L^p$ for some $p > 1$ and satisfy $\int f dx = 0$, $\mathcal{M}_1 = \int \abs{x}\abs{f(x)}dx < \infty$. 
Then $f \in \mathcal{H}^1$ and 
\begin{equation*}
\norm{f}_{\mathcal{H}^1} \lesssim_{d,p} \norm{f}_{p} + \mathcal{M}_1. 
\end{equation*}
\end{lemma}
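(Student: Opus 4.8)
The plan is to display an explicit atomic decomposition of $f$ and bound the sum of the coefficients by $\norm{f}_{p}+\mathcal{M}_{1}$, invoking the atomic characterization of $\mathcal{H}^{1}(\Real^{d})$. I will use two standard facts: $\mathcal{H}^{1}$ is a Banach space continuously embedded in $L^{1}$; and if $a$ is supported in a ball $B$, satisfies $\int a\,dx=0$ and lies in $L^{q}(B)$ for some $q>1$, then $\norm{a}_{\mathcal{H}^{1}}\lesssim_{d,q}\abs{B}^{1-1/q}\norm{a}_{q}$ (it is a bounded multiple of a $(1,q)$-atom). The first-moment hypothesis enters through the observation that on the dyadic annulus $A_{k}=\set{2^{k-1}<\abs{y}\le 2^{k}}$ the ``mass at scale $2^{k}$'' obeys $\norm{f\mathbf{1}_{A_{k}}}_{1}\le 2^{-(k-1)}\mu_{k}$, where $\mu_{k}:=\int_{A_{k}}\abs{y}\abs{f}\,dy$ and $\sum_{k}\mu_{k}\le\mathcal{M}_{1}$. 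This $L^{1}$ bound is by itself useless, since an atom supported in $B(0,2^{k})$ costs a factor $\abs{B(0,2^{k})}^{1-1/q}\sim 2^{kd(1-1/q)}$; the remedy is to measure the annular pieces in $L^{q}$ for a fixed $q$ \emph{strictly between $1$ and $p$} and interpolate, $\norm{f\mathbf{1}_{A_{k}}}_{q}\le\norm{f\mathbf{1}_{A_{k}}}_{1}^{1-\theta}\norm{f\mathbf{1}_{A_{k}}}_{p}^{\theta}$ with $\tfrac1q=(1-\theta)+\tfrac\theta p$, so that the decaying $L^{1}$ factor beats the growing volume factor.

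\textbf{Choice of exponent and construction.} First I would fix $q$: with $\theta=\theta(q)=\frac{1-1/q}{1-1/p}\in(0,1]$ the quantity $d(1-\tfrac1q)-(1-\theta)$ tends to $-1$ as $q\to 1^{+}$ (both $1-\tfrac1q$ and $\theta$ vanish), so one may choose $q\in(1,p]$, depending only on $d$ and $p$, with $\alpha:=(1-\theta)-d(1-\tfrac1q)>0$; fix this $q$. Next I would build the decomposition. Put $B_{k}=B(0,2^{k})$, $A_{k}=B_{k}\setminus B_{k-1}$, $\phi_{k}=\abs{B_{k}}^{-1}\mathbf{1}_{B_{k}}$ (so $\int\phi_{k}=1$ and $\abs{B_{k}}^{1-1/q}\norm{\phi_{k}}_{q}=1$), and $T_{k}=\int_{\abs{y}>2^{k}}f\,dy=-\int_{B_{k}}f\,dy$, the last equality using $\int f=0$. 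Define $e_{0}=f\mathbf{1}_{B_{0}}-\ps{\int_{B_{0}}f}\phi_{0}$ and, for $k\ge1$, $e_{k}=f\mathbf{1}_{A_{k}}+T_{k}\phi_{k}-T_{k-1}\phi_{k-1}$. Using $\int f=0$ one checks that each $e_{k}$ has integral zero and is supported in $B_{k}$, and that the partial sums telescope, $e_{0}+\sum_{k=1}^{N}e_{k}=f\mathbf{1}_{B_{N}}+T_{N}\phi_{N}$. Since $\norm{T_{N}\phi_{N}}_{1}=\abs{T_{N}}\le 2^{-N}\mathcal{M}_{1}\to0$ and $\norm{f\mathbf{1}_{B_{N}^{c}}}_{1}\le 2^{-N}\mathcal{M}_{1}\to0$, we get $f=e_{0}+\sum_{k\ge1}e_{k}$ with convergence in $L^{1}$.

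\textbf{The estimates.} For the central piece, $\abs{\int_{B_{0}}f}\lesssim_{d}\norm{f\mathbf{1}_{B_{0}}}_{q}\lesssim_{d,p}\norm{f}_{p}$ by two applications of H\"older (on the finite-measure set $B_{0}$, with $q\le p$), so $\norm{e_{0}}_{\mathcal{H}^{1}}\lesssim_{d}\abs{B_{0}}^{1-1/q}\norm{e_{0}}_{q}\lesssim_{d,p}\norm{f}_{p}$. For $k\ge1$ the atom bound gives $\norm{e_{k}}_{\mathcal{H}^{1}}\lesssim_{d,q}\abs{B_{k}}^{1-1/q}\norm{f\mathbf{1}_{A_{k}}}_{q}+\abs{T_{k}}+\abs{T_{k-1}}$, using $\abs{B_{k}}^{1-1/q}\norm{\phi_{k}}_{q}=1$ and $\abs{B_{k}}^{1-1/q}\norm{\phi_{k-1}}_{q}\lesssim_{d}1$. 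The correction terms are harmless: $\abs{T_{k}}\le\int_{\abs{y}>2^{k}}\abs{f}\le 2^{-k}\mathcal{M}_{1}$, so $\sum_{k\ge0}(\abs{T_{k}}+\abs{T_{k-1}})\lesssim\mathcal{M}_{1}$. For the main terms, interpolation together with $\norm{f\mathbf{1}_{A_{k}}}_{1}\le 2^{-(k-1)}\mu_{k}$ and $\norm{f\mathbf{1}_{A_{k}}}_{p}\le\norm{f}_{p}$ gives
\[
\abs{B_{k}}^{1-1/q}\norm{f\mathbf{1}_{A_{k}}}_{q}\lesssim_{d}2^{kd(1-1/q)}\ps{2^{-k}\mu_{k}}^{1-\theta}\norm{f}_{p}^{\theta}\lesssim 2^{-\alpha k}\,\mu_{k}^{1-\theta}\,\norm{f}_{p}^{\theta},
\]
and since $\mu_{k}^{1-\theta}\le\mathcal{M}_{1}^{1-\theta}$, summing the geometric series and using $a^{1-\theta}b^{\theta}\le a+b$ yields $\sum_{k\ge1}\abs{B_{k}}^{1-1/q}\norm{f\mathbf{1}_{A_{k}}}_{q}\lesssim_{\alpha}\mathcal{M}_{1}^{1-\theta}\norm{f}_{p}^{\theta}\lesssim\mathcal{M}_{1}+\norm{f}_{p}$. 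Hence $\sum_{k\ge0}\norm{e_{k}}_{\mathcal{H}^{1}}\lesssim_{d,p}\norm{f}_{p}+\mathcal{M}_{1}<\infty$; by completeness the series converges in $\mathcal{H}^{1}$, its sum must equal $f$ (since $\mathcal{H}^{1}$-convergence forces $L^{1}$-convergence), and the stated estimate follows.

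\textbf{Main obstacle.} The only genuinely non-routine point — and the step I expect to be the obstacle — is the choice of $q$: one cannot take $q=p$ and bound $\norm{f\mathbf{1}_{A_{k}}}_{p}$ by $\norm{f}_{p}$, for then $\sum_{k}\abs{B_{k}}^{1-1/q}\norm{f}_{p}$ is a divergent geometric series in $k$, and no $L^{p}$-type control of the annular pieces can be squeezed out of a first-moment hypothesis. Taking $q<p$ and interpolating against the $L^{1}$-mass — which does decay, like $2^{-k}\mu_{k}$ — is exactly what converts $\mathcal{M}_{1}<\infty$ into summability, and one must check that an admissible $q\in(1,p]$ exists for all $d$ and $p>1$, which follows from $d(1-\tfrac1q)-(1-\theta)\to-1$ as $q\to1^{+}$. (Alternatively one could verify the Riesz-transform criterion $\sum_{j}\norm{R_{j}f}_{1}<\infty$ directly, splitting $R_{j}f(x)$ via the cancellation $\int f=0$ for $\abs{x}$ large; this runs into the same annular $L^{q}$-interpolation at its core.)
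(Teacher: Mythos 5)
Your proof is correct, but it follows a genuinely different route from the paper. The paper works on the dual side: it fixes $\K$ with $\norm{\K}_{BMO}=1$, uses the mean-zero hypothesis to replace $\K$ by $\K-\K_1$, bounds the near-field contribution by H\"older against $\norm{f}_p$ (via the exponential integrability of $\K-\K_1$ on $B_1$), and on each dyadic annulus $A_n$ splits according to whether $\abs{\K-\K_1}$ exceeds $2^n$: the small set where it does is handled by John--Nirenberg together with the logarithmic growth $\abs{\K_{2^n}-\K_1}\lesssim n$, and the complement is absorbed into the first moment since $2^n\le\abs{x}$ there. You instead exhibit an explicit atomic decomposition $f=\sum_k e_k$ over dyadic balls, with the telescoping correction $T_k\phi_k$ restoring the vanishing moment on each piece, and you convert the first-moment decay $\norm{f\mathbf{1}_{A_k}}_1\lesssim 2^{-k}\mu_k$ into summability of the atomic coefficients by interpolating into $L^q$ for a $q\in(1,p)$ chosen so that $\alpha=(1-\theta)-d(1-1/q)>0$; I checked that this choice is always possible (one needs $1-1/q<\frac{1-1/p}{1+d(1-1/p)}$) and that your telescoping identities, the mean-zero property of each $e_k$, and the limit arguments via completeness of $\mathcal{H}^1$ and the embedding into $L^1$ all hold. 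The two arguments are roughly comparable in length and both invoke one standard black box (John--Nirenberg for the paper, the uniform $\mathcal{H}^1$-bound for $(1,q)$-atoms for you); your constructive version has the mild advantage of sidestepping the duality-theoretic subtlety of deducing $f\in\mathcal{H}^1$ from a uniform bound on pairings with $BMO$ functions, and of producing the decomposition explicitly, while the paper's version avoids any exponent-juggling and reads off both terms $\norm{f}_p$ and $\mathcal{M}_1$ directly from the near/far splitting.
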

\begin{proof}
We will prove this using the duality between BMO and $\mathcal{H}^{1}$. 
Therefore, let $\norm{\K}_{BMO} = 1$. 
In what follows, we denote $\K_R := \frac{1}{\abs{B_R(0)}}\int_{\abs{x}\leq R} \K(x) dx$. 
By the mean-zero condition on $f$,
\begin{align*}
\abs{\int \K f dx} & \leq \int_{\abs{x} \leq 1}\abs{\K - \K_1}\abs{f}dx + \int_{\abs{x} > 1} \abs{\K - \K_1}\abs{f} dx. \\
& := T_{1} + T_{2}. 
\end{align*}
By H\"older and $p > 1$ with $\norm{\K}_{BMO} = 1$, we have  
\begin{align*}
T_{1} & \leq \norm{f}_p \left(\int_{\abs{x} \leq 1}\abs{\K - \K_1}^{p^\prime} dx\right)^{1/p^\prime} \\
& \lesssim_{p^\prime} \norm{f}_p. 
\end{align*}
The second inequality can be found in the proof of the John-Nirenberg inequality in \cite{BigStein}. 
We now deal with the second term.
Define the dyadic annuli $A_n := \set{x \in \Real^d: 2^n < \abs{x} < 2^{n+1}}$ for $n \geq 0$. 
Let $E_n := \set{x \in A_n: \abs{\K - \K_1} > 2^n}$. By definition we have, then 
\begin{align*}
T_{2} = \sum_{n \geq 0} \int_{E_n}\abs{\K - \K_1}\abs{f} dx + \int_{A_n \setminus E_n}\abs{\K - \K_1}\abs{f} dx = T_{21}+T_{22}
\end{align*}
The second term can be estimated via, 
\begin{equation}
T_{22} \leq 2^n \int_{A_n} \abs{f} dx 
  \leq \int_{A_n} \abs{x}\abs{f(x)} dx. 
  \label{ineq:T_{2}p1}
\end{equation}
Since $\K \in BMO$, we have $\abs{\K_{2^{n+1}} - \K_{2^n}} \lesssim_{d} 1$, and therefore $\abs{\K_{2^n} - \K_1} \lesssim_{d} n$.  
Applying this to $T_{21}$ implies, 
\begin{align*}
T_{21} & \leq \left(\int_{E_n} \abs{\K - \K_1}^{p^\prime} dx \right)^{1/p^{\prime}}\norm{f}_p \\
& =  \left(p^\prime \int_{2^n}^\infty \abs{ \set{\abs{\K - \K_1} > \lambda} \cap A_n } \lambda^{p^\prime - 1} d\lambda \right)^{1/p^\prime}\norm{f}_p \\
& \leq \left(p^\prime \int_{2^n}^\infty \abs{ \set{\abs{\K - \K_{2^n}} > \lambda - Cn} \cap A_n } \lambda^{p^\prime - 1} d\lambda \right)^{1/p^\prime}\norm{f}_p, 
\end{align*}
for some $C=C(d)> 0$. Using the John-Nirenberg inequality \cite{BigStein}, there is some $c=c(d)>0$ such that
\begin{align*}
\int_{E_n}\abs{\K - \K_1}\abs{f} dx & \lesssim \left(\abs{A_n}\int_{2^n}^\infty e^{-c(\lambda- Cn)} \lambda^{p^\prime -1} d\lambda \right)^{1/p^\prime}\norm{f}_p \\
& \lesssim \left(2^{nd}e^{Ccn} \int_{2^n}^\infty e^{-\lambda c}\lambda^{p^\prime - 1} d\lambda \right)^{1/p^\prime}\norm{f}_p.   
\end{align*}
Therefore, for large $n$ we have, 
\begin{equation}
\int_{E_n}\abs{\K - \K_1}\abs{f} dx \lesssim 2^{(d+p^\prime - 1)n/p^\prime}e^{Ccn/p^\prime} e^{-c2^n/p^\prime}\norm{f}_p. \label{ineq:T_{2}p2}   
\end{equation}
Summing \eqref{ineq:T_{2}p1} and \eqref{ineq:T_{2}p2} over $n$ then implies
\begin{equation*}
  T_{2} \lesssim_p \norm{f}_p + \mathcal{M}_1. 
\end{equation*}
\end{proof} 


\section{Uniqueness of weak solutions to \eqref{def:Type1} and \eqref{def:Type2}} \label{sec:Type1and2}
We will require that the nonlocal linear operator $V$ satisfy the following three conditions. 
\begin{itemize}
\item[\textbf{C1}] $\norm{V\rho}_q \lesssim \norm{\rho}_p$ with $1 + \frac{1}{q} = \frac{d-1}{d} + \frac{1}{p}$ and $1 < p < q < \infty$. 
\item[\textbf{C2}] The regularity requirement that $\grad V:BMO(\Real^d) \rightarrow BMO(\Real^d)$ is a bounded linear operator which additionally satisfies $\norm{\grad Vf}_p \lesssim p\norm{f}_p$ for all $p$ sufficiently large. 
\item[\textbf{C3}] The $\dot{H}^{-1}$ stability estimate $\norm{Vf}_{2} \lesssim \norm{f}_{\dot{H}^{-1}}$.  
\end{itemize}

\begin{remark}
The first condition can be weakened to require that $V:L^{q_1} \rightarrow L^{q_2}$ for any $1 < q_1,q_2 < \infty$.
\end{remark}

\begin{remark} 
Condition \textbf{C1} is a basic integrability requirement which is mainly to control the decay of the velocity field at infinity. 
The Condition \textbf{C2} is what provides sufficient regularity on the vector field to ensure log-Lipschitz continuity via Lemma \ref{lem:BMO_logLip}.  
Only these two conditions are necessary to apply the proof of Theorem \ref{thm:Hdot1Inviscid} to linear problems in which the vector field is fixed independent of $\rho$. 
Condition \textbf{C3} will be used as a stability condition which controls the dependence of the vector field $V(\rho_1 - \rho_2)$ on $\rho_1-\rho_2$, and hence is necessary to deal with the nonlinear aspect of the problem.   
Condition \textbf{C3} holds in several classes in known cases, including:  
\begin{itemize}
\item[(a)] If $V$ is a Fourier multiplier then \textbf{C2} implies \textbf{C3}, since differentiation and $V$ will commute. This covers all cases of the form $V\rho = \vec{K} \ast \rho$ for any $\vec{K}$ such that $\grad K$ is a singular integral operator of Calder\'on-Zygmund type. 
Common examples include: the 2D Euler equations, a number of parabolic-elliptic Patlak-Keller-Segel systems and many general aggregation equations. 
\item[(b)] $V\rho = \grad c$ and $Lc = f$ where $L$ is a second-order uniformly elliptic PDE will satisfy \textbf{C3} with minimal conditions on the coefficients. For example for $Lc = -\grad \cdot (A(x)\grad c) + \gamma(x)c = f$, it suffices to take $A,\gamma \in L^\infty$ with $A$ symmetric and uniformly positive definite and $\gamma(x) \geq 0$. Hence in this case it seems \textbf{C2} is more stringent than \textbf{C3}.  
Theorem \ref{thm:Hdot1Inviscid} is applied in this context in \cite{BR11}. 
\item[(c)] If $V\rho$ is a pseudo-differential operator in the standard symbol class $S^{-1}$ (as defined in \cite{BigStein}) then it will satisfy both \textbf{C2} and \textbf{C3}.
\end{itemize} 
Of course, \textbf{C3} fails for many models. For example, in the case of the 2D SQG equations, $V\rho = \grad^\perp (-\Delta)^{-1/2}\rho = (-R_2\rho, R_1\rho)$, and \textbf{C3} fails in this case (it holds with $\dot{H}^{-1}$ replaced by $L^2$, which allows Theorem \ref{thm:L2Inviscid}). It can also fail in the case of (b) above if $A(x)$ is only symmetric positive semi-definite. 
However, we should note that we are unaware of an example in which \textbf{C2} holds and \textbf{C3} fails, unless one is interested in extending this method to nonlinear nonlocal operators $V[\rho]$, where \textbf{C3} must be replaced by $\norm{V[\rho_1] - V[\rho_2]}_2 \lesssim \norm{\rho_1 - \rho_2}_{\dot{H}^{-1}}$.
\end{remark} 

\begin{theorem}[$\dot{H}^{-1}$ Method] \label{thm:Hdot1Inviscid} 
Suppose $V$ satisfies properties \textbf{C1-3}   
and consider the active scalar, 
\begin{equation}
\rho_t + \grad \cdot (\rho V\rho) = \Delta A(\rho), \label{def:ActScalar1}
\end{equation} 
where $A:\Real \rightarrow [0,\infty)$ is measurable and non-decreasing (possibly zero). 
Suppose there exists two weak solutions $\rho_1(t),\rho_2(t)$ defined on $[0,T]$ which satisfy $\rho_i(t) \in L^{2}_t(0,T;BMO(\Real^d)) \cap C_t([0,T];L^{p_0}_x(\Real^d))$ where if $d \geq 3$ we assume $1 \leq p_0 < 2d/(d+2)$ and if $d =2$ we take $p_0 = 1$. In $d = 2$ we additionally assume $\rho_i(x)x \in L^\infty_t(0,T;L^1_x(\Real^d))$. 
Then if $\rho_1(0) = \rho_2(0)$ then $\rho_1(t) = \rho_2(t)$ on $[0,T]$. 
\end{theorem}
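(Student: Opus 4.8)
The plan is to run Yudovich's energy method on the quantity $h(t) := \norm{\rho_1(t) - \rho_2(t)}_{\dot{H}^{-1}}^2$ (with the appropriate substitute in $d=2$), show it satisfies an Osgood-type differential inequality with zero initial data, and conclude $h \equiv 0$. Set $\sigma = \rho_1 - \rho_2$. Subtracting the two weak formulations, $\sigma$ solves $\sigma_t + \grad\cdot(\rho_1 V\sigma) + \grad\cdot(\sigma V\rho_2) = \Delta(A(\rho_1) - A(\rho_2))$ in the weak sense. The natural test function is $(-\Delta)^{-1}\sigma = \N \ast \sigma$; the key preliminary point, and the reason for the $d=2$ hypotheses $p_0 = 1$ and $\rho_i x \in L^\infty_t L^1_x$, is that in two dimensions one must check $\sigma \in \mathcal{H}^1$ so that $\N \ast \sigma$ makes sense as an element of $\BMO \supset \dot{H}^1(\Real^2)$-substitute — this is exactly what Lemma \ref{lem:Hardy} is for (the mean-zero condition $\int\sigma\,dx = 0$ follows from conservation of mass, and $\int\abs{x}\abs{\sigma}\,dx < \infty$ from the weighted hypothesis). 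In $d \geq 3$ one simply needs $\sigma(t) \in \dot{H}^{-1}$, which follows from $\rho_i \in L^{p_0}$ with $p_0 < 2d/(d+2)$ by Sobolev embedding (the dual exponent puts $L^{p_0} \hookrightarrow \dot H^{-1}$).

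Next I would differentiate in time. Using $\sigma_t = -\grad\cdot(\rho_1 V\sigma) - \grad\cdot(\sigma V\rho_2) + \Delta(A(\rho_1)-A(\rho_2))$ and pairing against $(-\Delta)^{-1}\sigma$,
\[
\frac12 \frac{d}{dt} h(t) = \int (\rho_1 V\sigma)\cdot\grad(-\Delta)^{-1}\sigma\,dx + \int (\sigma V\rho_2)\cdot\grad(-\Delta)^{-1}\sigma\,dx - \int (A(\rho_1)-A(\rho_2))\sigma\,dx.
\]
The diffusion term is the crucial sign: since $A$ is non-decreasing, $(A(\rho_1)-A(\rho_2))(\rho_1-\rho_2) \geq 0$ pointwise, so the last term is $\leq 0$ and may be dropped — this is the monotonicity of $u \mapsto -\Delta A(u)$ in $\dot H^{-1}$, and it is why the argument is indifferent to how degenerate $A$ is. For the second term, $\grad\cdot V\rho_2 = 0$ is not assumed here (that's for \eqref{def:Type2}); instead one writes $\int(\sigma V\rho_2)\cdot\grad(-\Delta)^{-1}\sigma = \int V\rho_2 \cdot \big(\sigma\,\grad(-\Delta)^{-1}\sigma\big)$ and estimates $\norm{V\rho_2}$ via \textbf{C1} against $\norm{\rho_2}_{p_0}$ while $\sigma\,\grad(-\Delta)^{-1}\sigma$ is controlled by $\norm{\sigma}_{p_0}$ and $\norm{\grad(-\Delta)^{-1}\sigma}$ — interpolating $\norm\sigma_{p_0} \lesssim \norm\sigma_{\dot H^{-1}}^{\theta}\norm\sigma_{L^\infty}^{1-\theta}$ and bounding $\norm{\sigma}_\infty \leq \norm{\rho_1}_{BMO} + \norm{\rho_2}_{BMO}$-type quantities... actually the cleaner route, and the one I'd pursue, is to note $\norm\sigma_{p_0}$ is a priori bounded in time (continuity into $L^{p_0}$), so the second term is $\lesssim \norm{\rho_2}_{p_0}\,\norm{\sigma}_{p_0}\,\norm{\grad(-\Delta)^{-1}\sigma}_{q'} \lesssim C\,h(t)^{1/2}$ after another interpolation, which is too weak alone but combines with the first term.

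The heart of the matter is the first term, $\int (\rho_1 V\sigma)\cdot\grad(-\Delta)^{-1}\sigma\,dx$. This is where the $\BMO$ regularity of $\rho_1$ enters. Writing $w = \grad(-\Delta)^{-1}\sigma$ (so $\grad\cdot w = -\sigma$ and $\norm{w}_2 = h^{1/2}$ by \textbf{C3}-type reasoning, in fact $\norm{V\sigma}_2 \lesssim \norm\sigma_{\dot H^{-1}} = h^{1/2}$ by \textbf{C3} directly), the term is bounded by $\norm{\rho_1}_{p}\,\norm{V\sigma}_{p'}\,$-type Hölder splits, but the sharp way is a logarithmic estimate: by Hölder with exponents $(p, p', \infty)$ or a three-way split, and then invoking Lemma \ref{lem:BMO_linear_p} to get $\norm{\rho_1}_p \lesssim p\,\norm{\rho_1}_{BMO}^{1-p_0/p}\norm{\rho_1}_{p_0}^{p_0/p}$, together with \textbf{C2} giving $\norm{\grad V\sigma}_p \lesssim p\norm\sigma_p$ so that $V\sigma$ has log-Lipschitz-type control, one extracts a bound of the form
\[
\frac{d}{dt}h(t) \lesssim C(t)\, h(t)\,\big(1 + \abs{\log h(t)}\big),
\]
where $C(t) \in L^1_t$ because $\rho_i \in L^2_t BMO$ (the factor of $p$ from the two lemmas, optimized by choosing $p \sim \abs{\log h}$, is what produces the logarithm — the classical Yudovich trick). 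The main obstacle, and where care is needed, is precisely this optimization over $p$: one must track that $\norm{\rho_1}_p^{1/p} \to \norm{\rho_1}_\infty$-type blowup is exactly linear in $p$ (Lemma \ref{lem:BMO_linear_p}) and no worse, and that the $\dot H^{-1}$–$L^{p_0}$ interpolation of $\sigma$ feeds a positive power of $h$ against the growing constant; getting a genuine Osgood modulus $\mu(s) = s(1+\abs{\log s})$ rather than something like $s(1+\abs{\log s})^2$ requires the $BMO$ hypothesis on both solutions (mere $L^p$-growth control, à la \cite{Yudovich95}, would not close here — this is the point emphasized in the introduction). Once the Osgood inequality $h' \leq C(t)\mu(h)$ holds with $\int_0^{\cdot} C < \infty$ and $h(0) = 0$, since $\int_0 ds/\mu(s) = \infty$, the Osgood lemma forces $h(t) \equiv 0$, i.e. $\rho_1 \equiv \rho_2$ on $[0,T]$.
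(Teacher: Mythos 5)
Your overall architecture matches the paper's: test the difference equation against $(-\Delta)^{-1}\sigma$, drop the diffusion term by monotonicity of $A$ in $\dot H^{-1}$, use Lemma \ref{lem:Hardy} to make sense of the potential in $d=2$, and close with the $L^p$--$BMO$ interpolation of Lemma \ref{lem:BMO_linear_p} and a Yudovich-type endgame. The decomposition $\rho_1V\rho_1-\rho_2V\rho_2=\rho_1V\sigma+\sigma V\rho_2$ is a harmless variant of the paper's $wV\rho_1+\rho_2Vw$. Your term $\int \rho_1 V\sigma\cdot\grad(-\Delta)^{-1}\sigma\,dx$ is the analogue of the paper's $T3$ and closes exactly as you indicate: H\"older with $\norm{\rho_1}_p$, then interpolate $V\sigma$ and $\grad\phi$ between $L^2$ and $L^\infty$, with \textbf{C3} supplying $\norm{V\sigma}_2\lesssim h^{1/2}$, yielding $(p\norm{\rho_1}_{BMO}+1)\,C^{1/p}h^{1-1/p}$.

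The genuine gap is your treatment of the other term, $\int \sigma V\rho_2\cdot\grad(-\Delta)^{-1}\sigma\,dx$. The bound you settle on, $\lesssim \norm{\rho_2}_{p_0}\norm{\sigma}_{p_0}\norm{\grad(-\Delta)^{-1}\sigma}_{q'}\lesssim C\,h(t)^{1/2}$, is fatal rather than merely ``too weak'': once the differential inequality contains a term $C h^{1/2}$, the modulus $\mu(s)=s^{1/2}+s(1+\abs{\log s})$ satisfies $\int_0 ds/\mu(s)<\infty$, the Osgood lemma gives nothing, and indeed $h'=Ch^{1/2}$, $h(0)=0$ admits the nonzero solution $h=(Ct/2)^2$. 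No combination with the first term rescues this. The correct treatment --- and the crux of the paper's proof --- is to use $\sigma=-\Delta\phi$ and integrate by parts twice:
\begin{equation*}
-\int \sigma\, V\rho_2\cdot\grad\phi\,dx=\sum_{ij}\int \partial_{ii}\phi\,(V\rho_2)_j\,\partial_j\phi\,dx
=-\sum_{ij}\int \partial_i\phi\,\partial_j\phi\,\partial_i(V\rho_2)_j\,dx+\frac12\sum_{j}\int\abs{\grad\phi}^2\,\partial_j(V\rho_2)_j\,dx,
\end{equation*}
so that the term is controlled by $\int\abs{\grad V\rho_2}\abs{\grad\phi}^2dx$; only then do condition \textbf{C2} (so that $\grad V\rho_2\in BMO$) and Lemma \ref{lem:BMO_linear_p} apply, producing $(p\norm{\rho_2}_{BMO}+1)\norm{\grad\phi}_\infty^{2/p}h^{1-1/p}$ as needed. (Relatedly, your invocation of \textbf{C2} on $\grad V\sigma$ in the \emph{other} term is misplaced; \textbf{C2} is needed precisely here.) A second, more minor point: optimizing $p\sim\abs{\log h}$ to reach $h'\leq C(t)h(1+\abs{\log h})$ requires the prefactors, which involve $1/p$-powers of quantities that are only $L^2_t$, to be controlled uniformly in $p$; the paper sidesteps this by fixing $p$, integrating $h'\leq pf(t)h^{1-1/p}$ to $h(t)^{1/p}\leq\int_0^tf$, and sending $p\to\infty$ on a short time interval before iterating.
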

\begin{proof}
Define $w := \rho_1 - \rho_2$ and let $\phi = -\mathcal{N}\ast w$.
First, notice by Lemma \ref{lem:BMO_linear_p} that $\rho_i(t) \in L^{2}_t(0,T;L^{p}_x)$ for all $p > p_0$.
If $d \geq 3$, note also that $p_0/(p_0 - 1) > 2d/(d-2)$. 
By Young's inequality we have, 
\begin{equation*}
\norm{\grad \phi(t)}_\infty \leq \norm{\grad\mathcal{N}\mathbf{1}_{B_1(0)}}_{\frac{2d}{2d-1}}\norm{w}_{2d} + \norm{\grad \mathcal{N}\mathbf{1}_{\Real^d \setminus B_1(0)}}_{\frac{p_0}{p_0 - 1}}\norm{w}_{p_0}, 
\end{equation*}
which implies $\grad \phi \in L^2_tL^\infty_x$. 
If $d \geq 3$ we similarly have, 
\begin{equation*}
\norm{\phi(t)}_\infty \leq \norm{\mathcal{N}\mathbf{1}_{B_1(0)}}_{\frac{2d}{2d-1}}\norm{w}_{2d} + \norm{\grad \mathcal{N}\mathbf{1}_{\Real^d \setminus B_1(0)}}_{\frac{p_0}{p_0 - 1}}\norm{w}_{p_0}, 
\end{equation*}
and therefore $\phi(t) \in L^{2}_tL^\infty_x$. 
In $\Real^2$, by Lemma \ref{lem:Hardy}, $w(t) \in \mathcal{H}^1$ uniformly for a.e. time, hence, $\phi(t) \in L^\infty_tL^\infty_x$ from the duality of $\mathcal{H}^1$ and $BMO$, 
\begin{equation*}
\abs{\phi(t,x)} = \abs{\int \frac{1}{2\pi} \log \abs{x-y} w(y) dy} \lesssim \norm{w}_{\mathcal{H}^1}.  
\end{equation*}
In particular, $-\Delta \phi$ defines a bounded distribution and hence we have $-\Delta \phi = w$ in the weak sense.  
In $d \geq 3$ note by Young's inequality for $L^{\frac{d}{d-1},\infty}$, 
\begin{align*}
\norm{\grad \phi(t)}_{2} \lesssim \norm{w}_{\frac{2d}{d+2}}\norm{\mathcal{\grad N}}_{\frac{d}{d-1},\infty}, 
\end{align*}
hence $\grad \phi \in L_t^2L_x^2$. 
Proving this in $d = 2$ is more delicate as it does not follow from only $L^p$ estimates on $w$; we follow a procedure similar to that taken in \cite{BertozziBrandman10}. 
By Parseval's theorem it suffices to prove that $\widehat{\grad \phi} \in L^2_tL_x^2$. First, notice 
\begin{align*}
\int \abs{\grad \phi (t,x)}^2 dx = \int \abs{\widehat{\grad \phi}(t,\xi)}^2 d\xi \lesssim \int_{\abs{\xi} \leq 1} \frac{\abs{\hat{w}(t,\xi)}^2}{\abs{\xi}^2} d\xi + \int \abs{\hat{w}(t,\xi)}^2 d\xi. 
\end{align*} 
The latter term is controlled by Parseval's theorem since $w \in L^2_tL_x^2$. The first term is not a priori controlled as $\abs{\xi}^{-2}$ is not integrable in $\Real^2$, however, it suffices to prove that $\hat{w}(t)$ is uniformly Lipschitz continuous.  
By the Riemann-Lebesgue lemma, $w(t,x) \in C_t([0,T];L^1_x(\Real^2))$ implies $\hat{w}(t,\xi) \in C_t([0,T];C_x(\Real^2))$. 
Since \eqref{def:ActScalar1} is in divergence form, we have $\int w(t,x) dx \equiv 0$ and therefore $\hat{w}(t,0) \equiv 0$.
The Lipschitz continuity follows from the control on the first moment: 
let $\xi_1,\xi_2 \in \Real^2$, then by the mean value theorem for $i \in \set{1,2}$, 
\begin{align*}
\abs{\hat{\rho}_i(\xi_1) - \hat{\rho}_i(\xi_2)} & \leq \frac{1}{(2\pi)^{d/2}}\abs{\int \rho_i(x)\left(e^{-ix\cdot\xi_1} - e^{-ix\cdot\xi_2}\right) dx} \\
& \leq \frac{1}{(2\pi)^{d/2}}\int \abs{\rho_i(x)}\abs{1 - e^{ix\cdot(\xi_1 - \xi_2)}}dx \\
& \lesssim \abs{\xi_1 - \xi_2}\int \abs{\rho_i(x)} \abs{x} dx\\ & = \mathcal{M}_1(\rho_i)\abs{\xi_1 - \xi_2}. 
\end{align*}
Hence, $\grad \phi(t) \in L^2_x$ uniformly in time. 
Therefore, regardless of dimension we have that $\grad \phi \in L^2_tL_x^2 \cap L^2_tL_x^\infty$ and we can finally compute the evolution. 
By the regularity assumptions of the solution and the regularity properties of $\phi$ deduced above, we may use $\phi$ as a test function in the definition of weak solution and compute the time evolution of $\norm{\grad \phi(t)}_2$, 
\begin{align*}
\frac{1}{2}\frac{d}{dt}\int \abs{\grad \phi}^2 dx & = \int \grad \phi \cdot \grad \phi_t dx = \int \phi w_t dx \\
& = \int \phi \grad \cdot \left(\grad A(\rho_1) - \grad A(\rho_2) - w V\rho_1 - \rho_2(V\rho_1 - V\rho_2) \right) dx \\
& = -\int \left(A(\rho_1) - A(\rho_2)\right)(\rho_1 - \rho_2) dx + \int \grad \phi \cdot wV\rho_1 dx + \int \grad \phi \cdot \rho_2(V\rho_1 - V\rho_2) dx \\ & := T1 + T2 + T3.   
\end{align*}
Since $A$ is non-decreasing with have, 
\begin{equation*}
T1 = - \int \left(A(\rho_1) - A(\rho_2)\right)(\rho_1 - \rho_2) dx \leq 0. 
\end{equation*}
We now deal with $T2$, which is most related to the regularity of the advective velocity field. Since $w = -\Delta \phi$ and the boundary terms vanish due to the integrability conditions on $w$, 
\begin{align*}
T2 & = -\sum_{ij} \int \partial_{ii} \phi \partial_j \phi (V\rho_1)_j dx \\ 
& = \sum_{ij} \int \partial_i \phi \partial_{ij} \phi (V\rho_1)_j + \partial_i\phi \partial_j\phi \partial_i (V\rho_1)_j dx \\
& = \sum_{ij} \int \partial_j\left(\frac{1}{2} \abs{\partial_i\phi}^2 \right) (V\rho_1)_j + \partial_i\phi \partial_j\phi \partial_i (V\rho_1)_j dx \\
& \lesssim \int \abs{\grad V\rho_1}\abs{\grad \phi}^2 dx. 
\end{align*}
Using $\grad V\rho_1 \in L^2_tBMO_x$ and Lemma \ref{lem:BMO_linear_p} we then have, 
\begin{align*}
\int \abs{\grad V\rho_1}\abs{\grad \phi}^2 dx & \leq \norm{\grad V\rho_1}_p \left( \int \abs{\grad \phi}^{2p/(p-1)} dx \right)^{1-1/p} \\
& \lesssim \left( p\norm{\grad V\rho_1}_{BMO} + \norm{\grad V\rho_1}_{p_0} \right)\left(\norm{\grad \phi}_\infty^{2p/(p-1)- 2}\int \abs{\grad \phi}^{2} dx \right)^{1-1/p} \\
& \lesssim \left( p\norm{\grad V\rho_1}_{BMO} + 1 \right)\norm{\grad \phi}_\infty^{1/p}\norm{\grad \phi}_2^{2-2/p}.
\end{align*} 
Now we turn to $T3$, which regards the stability of the map $\rho \mapsto V\rho$. By Lemma \ref{lem:BMO_linear_p} again we have, 
\begin{align*}
\abs{T3} & \leq \norm{\rho_2}_p\norm{\grad \phi \cdot (V\rho_1 - V\rho_2)}_{p/(p-1)} \\
& \lesssim \left( p\norm{\rho_2}_{BMO} + \norm{\rho_2}_{p_0}\right)\norm{\grad \phi}_{2p/(p-1)}\norm{V\rho_1 - V\rho_2}_{2p/(p-1)} \\ 
& \lesssim \left( p\norm{\rho_2}_{BMO} + 1\right)\norm{\grad \phi}_{\infty}^{1/p}\norm{V\rho_1 - V\rho_2}_{\infty}^{1/p}\norm{\grad \phi}_2^{1-1/p}\norm{V\rho_1 - V\rho_2}_{2}^{1-1/p}. 
\end{align*} 
Assumptions \textbf{C1} and \textbf{C2} together with Morrey's inequality imply $V\rho_i \in L^2_tL_x^\infty$. 
Assumption \textbf{C3} implies $\norm{V(\rho_1 - \rho_2)}_2 \lesssim \norm{\grad \phi}_2$. 
Therefore, for all $p$ sufficiently large we have
\begin{align*}
\frac{d}{dt}\norm{\grad \phi(t)}_2^2 \leq p f(t) \norm{\grad \phi(t)}_2^{2-2/p}, 
\end{align*} 
for some positive $f(t) \in L^{2/(1+2/p)}_{t}$. Integrating, 
\begin{align*}
\norm{\grad \phi(t)}_2^{2/p} - \norm{\grad \phi(0)}_2^{2/p} \leq \int_0^t f(s) ds. 
\end{align*}
Since $\rho_1(0) = \rho_2(0)$ this implies, 
\begin{align*}
\norm{\grad \phi(t)}_2^2 \leq \left( \int_0^t f(s) ds \right)^{p} \leq \left(\norm{f}_{L^{3/2}_t(0,T)} t^{1/3}\right)^{p}.
\end{align*}
Hence, if we restrict $t \leq t_0 <  2^{-3}\norm{f}^{-3}_{L^{3/2}_t(0,T)}$, we have $\norm{\grad \phi(t)}_2^2 \leq 2^{-p}$ we may send $p \rightarrow \infty$ and prove $\norm{\grad \phi(t)}_2 \equiv 0$ a.e. in time $0 \in [0,t_0]$, however clearly we can simply iterate and prove that $\norm{\grad \phi(t)}_2 \equiv 0$ on $[0,T]$ and therefore $w (t) \equiv 0$.
\end{proof} 

The following result combines the methods of the above theorem with an $L^2$ estimate to provide a simplified and more general proof of the results in \cite{Rusin11}. 
In particular, we do not need to assume that logarithmic singularities in the derivatives occur on a bounded set.

\begin{theorem}[$L^2$ Estimates] \label{thm:L2Inviscid}
Let $V$ be a bounded linear operator $V:BMO(\Real^d;\Real) \rightarrow BMO(\Real^d;\Real^d)$ which satisfies $\norm{Vf}_p \lesssim p\norm{f}_p$ for all $2 \leq p < \infty$ and $\grad \cdot Vf \equiv 0$. 
Consider the active scalar, 
\begin{equation*}
\rho_t + V\rho\cdot \grad \rho = -\nu(-\Delta)^{\gamma}\rho,  
\end{equation*} 
for some $\gamma \in (0,1]$ and $\nu \geq 0$. 
Suppose there exists two weak solutions $\rho_1(t),\rho_2(t)$ defined on $[0,T]$ which satisfy $\rho_i(t) \in C_t([0,T];L^{2}_x(\Real^d))$ and $\grad \rho_i \in L^2_t(0,T;BMO(\Real^d)\cap L^{p_0}(\Real^d))$ for some $p_0 \in [1,\infty)$. Then if $\rho_1(0) = \rho_2(0)$ then $\rho_1(t) = \rho_2(t)$ on $[0,T]$.
\end{theorem}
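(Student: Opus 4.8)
The natural strategy is to mimic the proof of Theorem~\ref{thm:Hdot1Inviscid}, but since the velocity field $V\rho$ is only of Calder\'on--Zygmund type (so it fails \textbf{C3}) we estimate the difference of solutions directly in $L^2$ rather than in $\dot H^{-1}$. Set $w := \rho_1 - \rho_2$. Writing the equation for $\rho_i$ in the advective form $\d_t \rho_i + V\rho_i \cdot \grad \rho_i = -\nu(-\Delta)^\gamma \rho_i$ and subtracting, one gets
\begin{equation*}
\d_t w + V\rho_1 \cdot \grad w + (Vw)\cdot \grad \rho_2 = -\nu(-\Delta)^\gamma w.
\end{equation*}
Testing against $w$ itself (justified by the regularity hypotheses: $w \in C_tL^2_x$, $\grad\rho_i\in L^2_tL^{p_0}_x$, so $w_t$ lies in a dual space against which $w$ pairs), one obtains
\begin{equation*}
\frac12\frac{d}{dt}\norm{w(t)}_2^2 = -\int (V\rho_1\cdot\grad w)\, w\,dx - \int \big((Vw)\cdot\grad \rho_2\big) w\,dx - \nu\int \big((-\Delta)^\gamma w\big) w\,dx.
\end{equation*}

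\textbf{The three terms.} The dissipative term is $-\nu\norm{(-\Delta)^{\gamma/2}w}_2^2 \leq 0$ and is simply discarded. The first transport term vanishes: since $\grad\cdot V\rho_1 = 0$, $\int (V\rho_1\cdot\grad w)w\,dx = \frac12\int V\rho_1\cdot\grad(w^2)\,dx = 0$ (the integration by parts is justified because $V\rho_1 \in L^2_tL^\infty_x$ by \textbf{C2}-type boundedness and Lemma~\ref{lem:BMO_logLip}, and $w^2 \in L^1$). The only essential term is the ``stretching'' term $T := -\int ((Vw)\cdot\grad\rho_2)w\,dx$. By H\"older with exponents $(p, 2p/(p-1), 2p/(p-1))$ for suitable large $p$,
\begin{equation*}
\abs{T} \leq \norm{\grad\rho_2}_p \, \norm{Vw}_{2p/(p-1)}\,\norm{w}_{2p/(p-1)}.
\end{equation*}
Now apply Lemma~\ref{lem:BMO_linear_p} to $\grad\rho_2$ to get $\norm{\grad\rho_2}_p \lesssim p\norm{\grad\rho_2}_{BMO} + \norm{\grad\rho_2}_{p_0} \lesssim p\norm{\grad\rho_2}_{BMO} + 1$, and interpolate $\norm{w}_{2p/(p-1)} \leq \norm{w}_\infty^{1/p}\norm{w}_2^{1-1/p}$, similarly $\norm{Vw}_{2p/(p-1)} \leq \norm{Vw}_\infty^{1/p}\norm{Vw}_2^{1-1/p}$. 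Since $V$ is a bounded operator on $L^2$ (it is Calder\'on--Zygmund type, being a Fourier multiplier of order $0$; alternatively this follows from the hypothesis $\norm{Vf}_p\lesssim p\norm{f}_p$ with $p=2$), we have $\norm{Vw}_2 \lesssim \norm{w}_2$. The $L^\infty$ norms $\norm{w}_\infty, \norm{Vw}_\infty$ are finite for a.e.\ $t$ (from $\grad\rho_i \in BMO$ plus $\rho_i \in L^2$ and Lemma~\ref{lem:BMO_logLip}, one gets $\rho_i \in L^\infty$, hence so are $w$ and $Vw$), and their contribution enters only through the harmless factor $(\norm{w}_\infty\norm{Vw}_\infty)^{1/p}$ which is handled exactly as in Theorem~\ref{thm:Hdot1Inviscid}. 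Thus
\begin{equation*}
\frac{d}{dt}\norm{w(t)}_2^2 \leq p\, f(t)\, \norm{w(t)}_2^{2-2/p}
\end{equation*}
for some $f \in L^{2/(1+2/p)}_t \subset L^1_t$ (once $p$ is large), since $\norm{\grad\rho_2}_{BMO} \in L^2_t$ and $\norm{w}_\infty^{1/p}, \norm{Vw}_\infty^{1/p}$ are bounded in $L^{large}_t$.

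\textbf{Closing the argument.} From here the endgame is identical to Theorem~\ref{thm:Hdot1Inviscid}: integrate the differential inequality to get $\norm{w(t)}_2^{2/p} - \norm{w(0)}_2^{2/p} \leq \int_0^t f(s)\,ds$, use $w(0)=0$ to get $\norm{w(t)}_2^2 \leq (\int_0^t f)^p \leq (\norm{f}_{L^{3/2}_t}t^{1/3})^p$, restrict to a short time interval $[0,t_0]$ on which $\norm{f}_{L^{3/2}_t(0,t_0)}t_0^{1/3} \leq 1/2$, send $p\to\infty$ to conclude $w \equiv 0$ on $[0,t_0]$, and iterate across $[0,T]$. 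The main obstacle is not any single estimate but the bookkeeping needed to verify that all the integrations by parts and the use of $w$ as a test function are legitimate under the stated (fairly weak) regularity hypotheses --- in particular confirming $V\rho_i \in L^2_tL^\infty_x$ and $\rho_i \in L^\infty$ from $\grad\rho_i \in L^2_tBMO_x \cap L^2_tL^{p_0}_x$ together with $\rho_i \in C_tL^2_x$ via Lemma~\ref{lem:BMO_logLip}, and ensuring the boundary terms in the integrations by parts genuinely vanish. Compared to Theorem~\ref{thm:Hdot1Inviscid} the proof is actually cleaner because working in $L^2$ rather than $\dot H^{-1}$ removes the need for the Hardy-space argument (Lemma~\ref{lem:Hardy}) and the separate $\Real^2$ treatment; the divergence-free condition on $V$ is exactly what compensates for the loss of one derivative relative to the Type~\eqref{def:Type1} setting.
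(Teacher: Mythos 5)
Your proposal follows the paper's proof essentially step for step: estimate $\norm{w}_2$ directly, kill the $V\rho\cdot\grad w$ transport term with the divergence-free condition, discard the dissipation, and close the remaining term $\int w\,\grad\rho_2\cdot Vw\,dx$ using Lemma~\ref{lem:BMO_linear_p} together with $L^\infty$--$L^2$ interpolation of the $L^{2p/(p-1)}$ norms, ending with the same Osgood-type differential inequality and the same $p\to\infty$ endgame. The one place you deviate is in handling $\norm{Vw}_{2p/(p-1)}$: you interpolate it as $\norm{Vw}_\infty^{1/p}\norm{Vw}_2^{1-1/p}$ and assert $Vw\in L^\infty$ ``since $w\in L^\infty$.'' That deduction is not justified: $V$ is only assumed bounded on $BMO$ and on $L^p$ for $2\le p<\infty$ (in the motivating SQG case $V$ is built from Riesz transforms), and such zeroth-order operators do not map $L^\infty$ to $L^\infty$. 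The repair is immediate and is exactly what the paper does: apply the hypothesis $\norm{Vf}_p\lesssim p\norm{f}_p$ with exponent $2p/(p-1)\in(2,\infty)$ to get $\norm{Vw}_{2p/(p-1)}\lesssim \frac{2p}{p-1}\norm{w}_{2p/(p-1)}$ with constant uniform in large $p$, so that only $\norm{w}_{2p/(p-1)}$ needs to be interpolated between $\norm{w}_\infty$ (finite for a.e.\ $t$ by Lemma~\ref{lem:BMO_logLip}, as you correctly note) and $\norm{w}_2$. With that one substitution your argument coincides with the paper's.
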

\begin{remark}
It will be evident from the proof that we need only assume $\grad \rho \in L^{1+}_t(0,T;BMO \cap L^{p_0})$.  
\end{remark}
\begin{remark}
If $\nu > 0$ then we may potentially relax the conditions of this theorem; see \S\ref{sec:Dissipation} below. However, 
note that depending on the decay of $\grad \rho$, there are still dissipative cases covered by this theorem that are not covered by Theorem \ref{thm:L2Dissip} below.     
\end{remark}
\begin{proof}
We estimate the $L^2$ norm of $w := \rho_1 - \rho_2$. 
First, notice that by Lemma \ref{lem:BMO_logLip}, $\rho_i, w \in L^2_tL_x^\infty$ and are in fact log-Lipschitz continuous for a.e. time.  
Using the divergence free condition, 
\begin{align*}
\frac{1}{2}\frac{d}{dt}\int \abs{w}^2 dx & = -\nu\int w(-\Delta)^{\gamma}w dx - \int w V\rho_2\grad w + w\grad \rho_2 \cdot Vw dx\\
& = -\nu\int \abs{\abs{\grad}^{\gamma}w}^2 dx + \int w \grad \rho_2 \cdot Vw dx \\
& \leq \int w \grad \rho_2 \cdot Vw dx. 
\end{align*}
Using Lemma \ref{lem:BMO_linear_p} for $p > p_0$ and the boundedness of the mapping $V:L^{2p/(p-1)} \rightarrow L^{2p/(p-1)}$,  
\begin{align*}
\frac{1}{2}\frac{d}{dt}\int \abs{w}^2 dx & \leq \norm{\grad \rho_2}_p \left[ \int \abs{w Vw}^{\frac{p}{p-1}}dx \right]^{1 - 1/p} \\
& \lesssim \left(p\norm{\grad \rho_2}_{BMO} + \norm{\grad \rho_2}_{p_0}\right) \left[\int \abs{Vw}^{\frac{2p}{p-1}}dx \right]^{(p-1)/2p} \left[\int \abs{w}^{\frac{2p}{p-1}}dx \right]^{(p-1)/2p} \\
& \lesssim \left(p\norm{\grad \rho_2}_{BMO} + \norm{\grad \rho_2}_{p_0} \right)\left(\frac{2p}{p-1}\right)\left[\int \abs{w}^{\frac{2p}{p-1}}dx \right]^{(p-1)/p} \\ 
& \lesssim \left(p\norm{\grad \rho_2}_{BMO} + \norm{\grad \rho_2}_{p_0} \right)\norm{w}_\infty^{2/p} \norm{w}_2^{2-2/p}. 
\end{align*}
Since $\grad\rho_i \in L^2_t(0,T;BMO_x \cap L_x^{p_0})$, for $p$ sufficiently large we have, 
\begin{equation*}
\frac{1}{2}\frac{d}{dt}\norm{w}_2^2 \lesssim pf(t)\norm{w}_2^{2-2/p}, 
\end{equation*}
for some $0 < f(t) \in L^{2/(1+2/p)}_{t}(0,T)$. As above in the proof of Theorem \ref{thm:Hdot1Inviscid}, this suffices. 
\end{proof}

\section{Stability Results for Active Scalars with Dissipation} \label{sec:Dissipation} 
In \S\ref{sec:Type1and2}, the sign-definite dissipative effects were ignored and viscous and inviscid systems were treated in the same fashion. 
It is well-known that for the dissipative 2D quasi-geostrophic equations one can still have uniqueness at lower regularities than what is provided by Theorem \ref{thm:L2Inviscid} (see for example \cite{Wu05,Kiselev10} and the references therein).
In light of existing results on 2D SQG, the following easy general theorem is proved by a standard application of Gagliardo-Nirenberg or Sobolev embedding and essentially mirrors contraction mapping arguments for local existence and small data critical theory, but we state and prove it for completeness.    
\begin{theorem}[$L^2$ Estimates for Dissipative Active Scalars] \label{thm:L2Dissip}
Suppose there is some $q \geq d/(2\gamma)$ with $q > 1$ such that $V:L^{\frac{2q}{q-1}} \rightarrow L^{\frac{2q}{q-1}}$ is a bounded linear operator. 
Consider the dissipative active scalar, 
\begin{equation*}
\rho_t + V\rho\cdot \grad \rho = -\nu(-\Delta)^{\gamma}\rho,  
\end{equation*} 
for some $\gamma \in (0,1]$ and $\nu > 0$.
Suppose there exists two weak solutions $\rho_1(t),\rho_2(t)$ defined on $[0,T]$ which satisfy $\rho_i(t) \in C_t([0,T];L^{2}_x(\Real^d))\cap L_t^2(0,T;H^{\gamma})$ and $\grad \rho_i \in L^r_t(0,T;L_x^q)$ with $r = \frac{2\gamma q}{2\gamma q - d}$.   
Then, if $q > d/(2\gamma)$ we have the stability estimate, 
\begin{equation}
\norm{\rho_1(t) - \rho_2(t)}_2 \leq \exp \left[ \nu^{-\frac{d}{2\gamma q - d}} C(d,q,\norm{\grad \rho_i}_{L_t^rL_x^q},\norm{V}_{L^{2q/(q-1)} \rightarrow L^{2q/(q-1)}})\right]\norm{\rho_1(0) - \rho_2(0)}_2. \label{ineq:DissipStab_L2}
\end{equation} 
If $d > 2\gamma$ and the critical norm satisfies $\norm{\grad \rho_i}_{L^\infty_tL_x^{\frac{d}{2\gamma}}} \leq \nu \epsilon_0(d,\norm{V}_{L^{2d/(d-2\gamma)} \rightarrow L^{2d/(d-2\gamma)}})$, for some absolute $\epsilon_0$ depending only on those quantities indicated, then the solution map is a contraction in $L^2$, that is, for all $t > s \geq 0$,  
\begin{equation*}
\norm{\rho_1(t) - \rho_2(t)} \leq \norm{\rho_1(s) - \rho_2(s)}_{2}, \;\; \forall t \geq s. \label{ineq:DissipContract_L2}
\end{equation*}
\end{theorem}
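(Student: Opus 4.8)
The plan is to close a Grönwall estimate on $\|w(t)\|_2^2$, where $w := \rho_1 - \rho_2$, by absorbing the single problematic nonlinear term into the dissipation through a Gagliardo--Nirenberg interpolation followed by Young's inequality; the borderline exponent $q = d/(2\gamma)$ is treated separately and is exactly where the smallness hypothesis and the contraction conclusion enter. Subtracting the two equations and writing $V\rho_1\cdot\grad\rho_1 - V\rho_2\cdot\grad\rho_2 = V\rho_1\cdot\grad w + Vw\cdot\grad\rho_2$, I would pair with $w$. The standing divergence-free property $\grad\cdot V\rho \equiv 0$ of the velocity operator (as in \eqref{def:Type2}) kills the transport term, since $\int w\,(V\rho_1\cdot\grad w)\,dx = -\tfrac12\int(\grad\cdot V\rho_1)\,w^2\,dx = 0$, leaving the energy identity
\[
\frac12\frac{d}{dt}\|w\|_2^2 = -\nu\|w\|_{\dot{H}^\gamma}^2 - \int w\,(Vw\cdot\grad\rho_2)\,dx .
\]
Because the solutions are only weak, this identity needs the customary justification: from the equation, $\partial_t w$ lies in a suitable negative-order space, so $t\mapsto\|w(t)\|_2^2$ is absolutely continuous and the Lions--Magenes lemma applies; I would obtain this by a routine mollification/density argument and not dwell on it.

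For the term $\int w\,(Vw\cdot\grad\rho_2)\,dx$ I would apply Hölder with exponents $q,\ \tfrac{2q}{q-1},\ \tfrac{2q}{q-1}$ together with the boundedness of $V$ on $L^{2q/(q-1)}$, obtaining $\bigl|\int w\,(Vw\cdot\grad\rho_2)\,dx\bigr| \lesssim \|\grad\rho_2\|_q\,\|V\|_{\mathrm{op}}\,\|w\|_{2q/(q-1)}^2$. The crux is then Gagliardo--Nirenberg: for $q > d/(2\gamma)$ one has $\dot{H}^\gamma\hookrightarrow L^{2q/(q-1)}$ by interpolation with $L^2$, and $\|w\|_{2q/(q-1)}\lesssim\|w\|_2^{1-\theta}\|w\|_{\dot{H}^\gamma}^{\theta}$ with $\theta = \tfrac{d}{2\gamma q}\in(0,1)$. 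Substituting and applying Young's inequality to $\|w\|_2^{2(1-\theta)}\|w\|_{\dot{H}^\gamma}^{2\theta}$, with the $\dot{H}^\gamma$ factor raised to the conjugate power $1/\theta$, lets me absorb the full $\nu\|w\|_{\dot{H}^\gamma}^2$ into the dissipative term at the cost of a constant of size $\nu^{-\theta/(1-\theta)}$; the surviving inequality is $\frac{d}{dt}\|w\|_2^2\lesssim \nu^{-\theta/(1-\theta)}\|\grad\rho_2\|_q^{1/(1-\theta)}\|w\|_2^2$. The exponent bookkeeping gives $\tfrac{\theta}{1-\theta} = \tfrac{d}{2\gamma q - d}$ and $\tfrac{1}{1-\theta} = \tfrac{2\gamma q}{2\gamma q - d} = r$, so Grönwall together with $\grad\rho_2\in L^r_tL^q_x$ yields $\|w(t)\|_2^2\le\|w(0)\|_2^2\exp\bigl(C\nu^{-d/(2\gamma q - d)}\|\grad\rho_2\|_{L^r_tL^q_x}^{r}\bigr)$; taking square roots and folding $C$, $\|V\|_{\mathrm{op}}$ and $\|\grad\rho_i\|_{L^r_tL^q_x}$ into the constant $C(d,q,\dots)$ is precisely \eqref{ineq:DissipStab_L2}.

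For the contraction statement I would rerun the same computation at the critical exponent $q = d/(2\gamma)$ --- which forces $d > 2\gamma$ so that $q > 1$ --- where the interpolation degenerates to the endpoint Sobolev embedding $\dot{H}^\gamma(\Real^d)\hookrightarrow L^{2d/(d-2\gamma)}(\Real^d)$. This replaces $\|w\|_{2q/(q-1)}^2$ directly by $C_{\mathrm{Sob}}^2\|w\|_{\dot{H}^\gamma}^2$, so that
\[
\frac12\frac{d}{dt}\|w\|_2^2 \le \bigl(-\nu + C_0\,\|\grad\rho_2\|_{L^\infty_tL^{d/(2\gamma)}_x}\bigr)\|w\|_{\dot{H}^\gamma}^2 ,
\]
with $C_0 = C_0(d,\gamma,\|V\|_{\mathrm{op}})$ the product of the Sobolev constant and the operator norm of $V$ on $L^{2d/(d-2\gamma)}$. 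Choosing $\epsilon_0 := 1/(2C_0)$, the smallness hypothesis makes the right-hand side non-positive, hence $\|w(t)\|_2$ is non-increasing and $\|w(t)\|_2\le\|w(s)\|_2$ for all $t\ge s$. The only real obstacle throughout is pure bookkeeping --- making sure the powers of $\nu$, of $\|\grad\rho_i\|_q$, and the time-integrability exponent $r$ come out exactly as claimed, and that no residual positive multiple of $\|w\|_{\dot{H}^\gamma}^2$ survives after the Young step --- together with the standard density argument underlying the energy identity; neither is a genuine difficulty, consistent with the theorem being a routine contraction-type estimate.
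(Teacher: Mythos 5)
Your proposal is correct and follows essentially the same route as the paper's proof: the same splitting of the nonlinearity with the divergence-free condition killing the transport term, the same H\"older estimate with the boundedness of $V$ on $L^{2q/(q-1)}$, the Gagliardo--Nirenberg interpolation with $\theta = d/(2\gamma q)$ followed by weighted Young's inequality to absorb the dissipation in the subcritical case, and the endpoint Sobolev embedding $\dot{H}^\gamma \hookrightarrow L^{2d/(d-2\gamma)}$ with the smallness hypothesis in the critical case. Your exponent bookkeeping ($\theta/(1-\theta) = d/(2\gamma q - d)$ and $1/(1-\theta) = r$) matches the paper's exactly.
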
 
\begin{proof} 
Beginning as before we define $w := \rho_1 - \rho_2$ and using the divergence free condition, 
\begin{align*}
\frac{1}{2}\frac{d}{dt}\int \abs{w}^2 dx & = -\nu\int w(-\Delta)^{\gamma}w dx - \int w V\rho_2\grad w + w\grad \rho_2 \cdot Vw dx\\ 
& = -\nu\int \abs{ \abs{\grad}^\gamma w}^2 dx + \int w \grad \rho_2 \cdot Vw dx,  
\end{align*}
where $\abs{\grad}$ is the Fourier multiplier $\abs{\grad} f := (2\pi)^{-d/2}\int \abs{\xi}\hat{f}(\xi)e^{ix\cdot \xi}d\xi$. 
First consider the subcritical case $q > \frac{d}{2\gamma}$. 
By Gagliardo-Nirenberg we have, 
\begin{align*}
\norm{w}_{\frac{2q}{q-1}} \leq C(d,q,\gamma)\norm{w}_2^{1-\frac{d}{2q\gamma}}\norm{\abs{\grad}^\gamma w}^{\frac{d}{2q\gamma}}_2, 
\end{align*}
which implies, 
\begin{align*}
\frac{1}{2}\frac{d}{dt}\int \abs{w}^2 dx & \leq -\nu\norm{\abs{\grad}^\gamma w}_2^2 + \norm{w}_{\frac{2q}{q-1}}\norm{Vw}_{\frac{2q}{q-1}}\norm{\grad \rho_2}_q \\ 
& \leq -\nu\norm{\abs{\grad}^\gamma w}_2^2 + \norm{w}_{\frac{2q}{q-1}}^2 \norm{V}_{L^{\frac{2q}{q-1}} \rightarrow L^{\frac{2q}{q-1}}}\norm{\grad \rho_2}_q \\
& \leq -\nu\norm{\abs{\grad}^\gamma w}_2^2 + C(d,q,\gamma)\norm{w}_2^{2-\frac{d}{q\gamma}}\norm{\abs{\grad}^\gamma w}_2^{\frac{d}{q\gamma}}\norm{V}_{L^{\frac{2q}{q-1}} \rightarrow L^{\frac{2q}{q-1}}}\norm{\grad \rho_2}_q.     
\end{align*}
By weighted Young's inequality, 
\begin{align*}
\frac{1}{2}\frac{d}{dt}\int \abs{w}^2 dx & \leq -\frac{\nu}{2}\norm{\abs{\grad}^\gamma w}_2^2 + \left(\frac{\nu}{2}\right)^{-\frac{d}{2\gamma q - d}}C\left(d,q,\gamma,\norm{V}_{L^{\frac{2q}{q-1}} \rightarrow L^{\frac{2q}{q-1}}}\right)\norm{\grad \rho_2}_q^{\frac{2\gamma q}{2\gamma q - d}}\norm{w}_2^{2} \\
& \leq \left(\frac{\nu}{2}\right)^{-\frac{d}{2\gamma q - d}}C\left(d,q,\gamma,\norm{V}_{L^{\frac{2q}{q-1}} \rightarrow L^{\frac{2q}{q-1}}}\right)\norm{\grad \rho_2}_q^{\frac{2\gamma q}{2\gamma q - d}}\norm{w}_2^{2}.   
\end{align*}
which implies the assertion \eqref{ineq:DissipStab_L2} by Gr\"onwall's inequality. 

We now prove the critical regularity assertion \eqref{ineq:DissipContract_L2}. 
Beginning as before we have, 
\begin{align*}
\frac{1}{2}\frac{d}{dt}\int \abs{w}^2 dx & = -\nu\int w(-\Delta)^{\gamma}w dx - \int w V\rho_2\grad w + w\grad \rho_2 \cdot Vw dx\\ 
& = -\nu\int \abs{ \abs{\grad}^\gamma w}^2 dx + \int w \grad \rho_2 \cdot Vw dx \\
& \leq -\nu\norm{\abs{\grad}^\gamma w}_2^2 + \norm{w}_{\frac{2d}{d-2\gamma}}^2 \norm{V}_{L^{\frac{2d}{d-2\gamma}} \rightarrow L^{\frac{2d}{d-2\gamma}}}\norm{\grad \rho_2}_{\frac{d}{2\gamma}}.
\end{align*}
By (homogeneous) Sobolev embedding we have,
\begin{align*} 
\frac{1}{2}\frac{d}{dt}\int \abs{w}^2 dx & = -\nu\norm{\abs{\grad}^\gamma w}_2^2 + C(\gamma,d)\norm{V}_{L^{\frac{2d}{d-2\gamma}} \rightarrow L^{\frac{2d}{d-2\gamma}}}\norm{\grad \rho_2}_{\frac{d}{2\gamma}}\norm{\abs{\grad}^\gamma w}^2_2. 
\end{align*}
Hence, provided, 
\begin{equation*}
C(\gamma,d)\norm{V}_{L^{\frac{2d}{d-2\gamma}} \rightarrow L^{\frac{2d}{d-2\gamma}}}\norm{\grad \rho_2}_{\frac{d}{2\gamma}} \leq \nu, 
\end{equation*} 
the solution map is a contraction in $L^2$. 
\end{proof}    

Using the Sobolev inequalities to capitalize on the stabilizing effects of the diffusion was straightforward in the previous theorem. 
A similar (but slightly less obvious) approach can also be adapted for the $\dot{H}^{-1}$ method used above in Theorem \ref{thm:Hdot1Inviscid}.
The proof is limited to linear diffusion since, due to the lack of an appropriate embedding theorem, there does not seem to be an obvious way to extract a stabilizing effect from the nonlinear diffusions.   
One way this theorem is useful is that it shows solutions to \eqref{def:Type1} with ``finite energy dissipation,'' $\int_0^t\int\abs{\rho}^2 +  \abs{\grad \rho}^2 dx dt < \infty$ are unique (see Remark \ref{rmk:EnergyDissipation} below). 
However, this theorem is obviously not as powerful as the deep methods of \cite{GallagherGallay05}, which ultimately prove that measure-valued solutions to the 2D Navier-Stokes equations in vorticity form are unique.
N. Masmoudi and one of the authors recently extended these methods to include the 2D parabolic-elliptic Patlak-Keller-Segel system in \cite{MasmoudiBedrossian12}.  
A primary aspect of these methods is the approximation of the contribution of the atomic part of the initial data as forward self-similar solutions and a good understanding of the linearization around these solutions, an approach which could prove difficult or impossible in some cases covered by Theorem \ref{thm:DissipHdot1}. 

\begin{theorem}[$\dot{H}^{-1}$ Method for Dissipative Active Scalars] \label{thm:DissipHdot1}
Suppose, if $d\geq 3$ that there is a $q \geq d/2$ or if $d = 2$ that there is a $q > 1$ such that
$\grad V:L^q \rightarrow L^q$ and $V \grad \cdot : L^{\frac{2q}{q-1}} \rightarrow L^{\frac{2q}{q-1}}$ are both bounded linear operators.
Consider the viscous active scalar,
\begin{equation*}
\rho_t + \grad \cdot (\rho V\rho) = \nu \Delta \rho, \;\; \nu > 0. 
\end{equation*} 
Suppose there exists two weak solutions $\rho_1(t),\rho_2(t)$ defined on $[0,T]$ which satisfy 
\begin{equation*}
\rho_i(t) \in L^{r}_t(0,T;L^{q}(\Real^d)) \cap L^2(0,T;L^2) \cap C_t([0,T];L^{\frac{2d}{d+2}}_x(\Real^d))
\end{equation*}
with  $r = 2q/(2q-d)$. 
In $d = 2$ we further assume $u(x)x \in L^\infty_t(0,T;L^1_x(\Real^d))$. 
Then we have the stability estimate
\begin{equation}
\norm{\rho_1(t) - \rho_2(t)}_{\dot{H}^{-1}} \leq \exp\left[\nu^{-\frac{d}{2q-d}}C(d,q,V)\norm{\rho_i}^r_{L^r_tL_x^q(0,t)}\right)\norm{\rho_1(0) - \rho_2(0)}_{\dot{H}^{-1}}. \label{ineq:stabilitydotH}
\end{equation}
If $d > 2$ and the critical norm satisfies $\norm{\rho_i(t)}_{d/2} \leq \nu\epsilon_0(d,V)$ for some $\epsilon_0 > 0$ depending only on the quantities specified for a.e. $t \in (0,T)$, then the solution map is a contraction in $\dot{H}^{-1}$, that is, for all $t > s \geq 0$,
\begin{equation*}
\norm{\rho_1(t) - \rho_2(t)}_{\dot{H}^{-1}} \leq \norm{\rho_1(s) - \rho_2(s)}_{\dot{H}^{-1}}. 
\end{equation*} 
\end{theorem}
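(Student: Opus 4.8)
The plan is to run the $\dot{H}^{-1}$ energy estimate of Theorem \ref{thm:Hdot1Inviscid} while keeping the dissipative term $-\nu\|w\|_2^2$ and then absorbing the nonlinearity into it by Gagliardo--Nirenberg interpolation, exactly as linear diffusion was exploited in Theorem \ref{thm:L2Dissip}. Set $w := \rho_1 - \rho_2$ and $\phi := -\mathcal{N}\ast w$, so that weakly $-\Delta\phi = w$ and $\|\grad\phi\|_2 = \|w\|_{\dot{H}^{-1}}$. The first step is to verify that $\phi(t)$ is an admissible test function. Since $w \in C_t L^{2d/(d+2)}_x$, Young's inequality together with $\grad\mathcal{N} \in L^{d/(d-1),\infty}$ and the local integrability of $\mathcal{N}$ yields, for $d \ge 3$, $\phi \in L^\infty_{t,x}$ and $\grad\phi \in C_t L^2_x \cap L^2_t L^\infty_x$; in $d = 2$ one argues exactly as in Theorem \ref{thm:Hdot1Inviscid}, using that $\int w\,dx \equiv 0$ (the equation is in divergence form), the first-moment hypothesis, and Lemma \ref{lem:Hardy} to place $w(t) \in \mathcal{H}^1$ uniformly in time, whence $\phi \in L^\infty_{t,x}$ and the non-integrable low-frequency factor $\abs{\xi}^{-2}$ in $\widehat{\grad\phi}$ is controlled via Lipschitz continuity of $\hat{w}(t)$. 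The remaining regularity needed (for instance $\nu\grad w,\ \rho_i V\rho_i \in L^2_t L^2_x$ and $w_t \in L^2_t \dot{H}^{-1}$) is part of the definition of weak solution.

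Second, using $\phi(t)$ as a test function and decomposing $\rho_1 V\rho_1 - \rho_2 V\rho_2 = wV\rho_1 + \rho_2 Vw$, one obtains after integrating the divergence by parts
\begin{equation*}
\frac{1}{2}\frac{d}{dt}\|\grad\phi\|_2^2 = \langle w_t,\phi\rangle = -\nu\|w\|_2^2 + \int \grad\phi\cdot wV\rho_1\,dx + \int \grad\phi\cdot\rho_2 Vw\,dx =: -\nu\|w\|_2^2 + T2 + T3.
\end{equation*}
For $T2$, substitute $w = -\Delta\phi$ and integrate by parts once more, exactly as in the estimate of $T2$ in Theorem \ref{thm:Hdot1Inviscid}, to get $\abs{T2} \lesssim \int \abs{\grad V\rho_1}\abs{\grad\phi}^2\,dx \le \|\grad V\rho_1\|_q \|\grad\phi\|_{2q/(q-1)}^2 \lesssim \|\rho_1\|_q \|\grad\phi\|_{2q/(q-1)}^2$, using the hypothesis $\grad V : L^q \to L^q$. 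For $T3$, write $Vw = -V\grad\cdot(\grad\phi)$ and use $V\grad\cdot : L^{2q/(q-1)} \to L^{2q/(q-1)}$ with Hölder to get $\abs{T3} \le \|\rho_2\|_q \|\grad\phi\|_{2q/(q-1)} \|Vw\|_{2q/(q-1)} \lesssim \|\rho_2\|_q \|\grad\phi\|_{2q/(q-1)}^2$.

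Third, interpolate: $\|\grad\phi\|_2 = \|w\|_{\dot{H}^{-1}}$ and $\|\,\abs{\grad}^2\phi\,\|_2 = \|w\|_2$, and the condition $q \ge d/2$ (resp.\ $q > 1$ when $d = 2$) is precisely what makes $\grad\phi \in \dot{H}^{d/(2q)}$ with $d/(2q) \in (0,1]$, so that Gagliardo--Nirenberg together with Sobolev embedding gives $\|\grad\phi\|_{2q/(q-1)}^2 \lesssim \|w\|_{\dot{H}^{-1}}^{2-d/q}\|w\|_2^{d/q}$. Feeding this back,
\begin{equation*}
\frac{1}{2}\frac{d}{dt}\|w\|_{\dot{H}^{-1}}^2 \le -\nu\|w\|_2^2 + C\,(\|\rho_1\|_q + \|\rho_2\|_q)\,\|w\|_{\dot{H}^{-1}}^{2-d/q}\|w\|_2^{d/q},
\end{equation*}
and a weighted Young inequality with conjugate exponents $2q/d$ and $2q/(2q-d)$ absorbs $\tfrac{\nu}{2}\|w\|_2^2$, leaving $\frac{d}{dt}\|w\|_{\dot{H}^{-1}}^2 \le C\nu^{-d/(2q-d)}(\|\rho_1\|_q^r + \|\rho_2\|_q^r)\|w\|_{\dot{H}^{-1}}^2$ with $r = 2q/(2q-d)$; Grönwall's inequality then gives \eqref{ineq:stabilitydotH}. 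In the critical case $d > 2$, $q = d/2$, one has $d/(2q) = 1$, so the interpolation degenerates to the Sobolev embedding $\|\grad\phi\|_{2d/(d-2)} \lesssim \|w\|_2$ and the nonlinear term is bounded directly by $C(\|\rho_1\|_{d/2} + \|\rho_2\|_{d/2})\|w\|_2^2$; if $\|\rho_i\|_{d/2} \le \nu\epsilon_0$ with $C\epsilon_0$ small enough this does not exceed $\nu\|w\|_2^2$, whence $\frac{d}{dt}\|w\|_{\dot{H}^{-1}}^2 \le 0$ and the solution map is an $\dot{H}^{-1}$-contraction.

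The main obstacle is the first step: rigorously justifying, at this low regularity, that $\phi$ is an admissible test function and that the formal computation of $\frac{d}{dt}\|\grad\phi\|_2^2$ — including the vanishing of the boundary terms in the two integrations by parts defining $T2$ — is legitimate, with the $d = 2$ case in particular requiring the Hardy-space input of Lemma \ref{lem:Hardy} and a separate Fourier-side treatment of the low-frequency part, exactly as in Theorem \ref{thm:Hdot1Inviscid}. Once this is secured, the rest is routine, the one genuinely quantitative point being that $q \ge d/2$ is exactly the threshold at which the dissipation can dominate the nonlinearity.
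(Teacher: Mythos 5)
Your proposal is correct and follows essentially the same route as the paper's proof: the same test function $\phi = -\mathcal{N}\ast w$, the same decomposition into $wV\rho_1$ and $\rho_2 Vw$ estimated via $\grad V:L^q\to L^q$ and $V\grad\cdot:L^{2q/(q-1)}\to L^{2q/(q-1)}$, the same Gagliardo--Nirenberg/Calder\'on--Zygmund interpolation $\norm{\grad\phi}_{2q/(q-1)}\lesssim\norm{\grad\phi}_2^{1-d/(2q)}\norm{w}_2^{d/(2q)}$, and the same weighted Young plus Gr\"onwall conclusion, with the identical Sobolev-embedding treatment of the critical case. The only cosmetic difference is that you phrase the interpolation through $\dot{H}^{d/(2q)}$ rather than explicitly via $\norm{D^2\phi}_2\lesssim\norm{\Delta\phi}_2=\norm{w}_2$.
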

\begin{remark} \label{rmk:EnergyDissipation}
Many solutions one can construct will not only have $\rho_i \in L^2(0,T;L^2)$ but also have $\grad \rho_i \in L^2(0,T;L^2)$.
In this case, Sobolev embedding (in $d \geq 3$) or Gagliardo-Nirenberg (in $d = 2$) imply that $\rho_i \in L^r_t(0,T;L_x^q)$ for a range of admissible pairs $(r,q)$ 
and hence all such active scalars are unique with only the assumption of $\rho_i \in L^2(0,T;H^1) \cap C_t([0,T];L_x^{2d/(d+2)}(\Real^d))$ (with the appropriate modification in 2D).  
An interesting side note is that in 2D, Theorem \ref{thm:DissipHdot1} is only slightly stronger than Theorem \ref{thm:Hdot1Inviscid}, as $H^1(\Real^2) \hookrightarrow BMO(\Real^2) \cap L^2(\Real^2)$ \cite{KozonoWadade08}. 
\end{remark} 
\begin{proof}
We prove the assertion \eqref{ineq:stabilitydotH} first. 
Let $w := \rho_1 - \rho_2$ and $-\Delta \phi = w$, equivalent to $-\mathcal{N} \ast w = \phi$ since
as in the proof of Theorem \ref{thm:Hdot1Inviscid} we have $\phi \in L_t^rL_x^\infty$. 
Moreover, $\grad \phi \in L_t^2 L_x^2$ and computing the evolution of $\norm{\grad \phi(t)}_2$ similar to Theorem \ref{thm:Hdot1Inviscid},
\begin{align*}
\frac{1}{2}\frac{d}{dt}\int \abs{\grad \phi}^2 dx & \leq -\nu\int \abs{w}^2 dx + 2\int \abs{\grad V\rho_1}\abs{\grad \phi}^2 dx + \int \rho_2 \grad \phi \cdot Vw dx  \\
& \leq -\nu\norm{w}_2^2 + 2\norm{\grad V\rho_1}_{q}\norm{\grad \phi}_{\frac{2q}{q-1}}^2 + \norm{\rho_2}_{q}\norm{Vw}_{\frac{2q}{q-1}}\norm{\grad \phi}_{\frac{2q}{q-1}} \\
& \leq -\nu\norm{w}_2^2 + 2\norm{\grad V}_{L^{q} \rightarrow L^{q}}\norm{\rho_1}_{q}\norm{\grad \phi}_{\frac{2q}{q-1}}^2 + \norm{\rho_2}_q\norm{V \grad \cdot}_{L^{\frac{2q}{q-1}} \rightarrow L^{\frac{2q}{q-1}}}\norm{\grad \phi}_{\frac{2q}{q-1}}^2. 
\end{align*}
By the Gagliardo-Nirenberg inequality and the Calder\'on-Zygmund inequality, 
\begin{equation*}
\norm{\grad \phi}_{\frac{2q}{q-1}} \lesssim_d \norm{\grad \phi}_2^{1-\frac{d}{2q}}\norm{D^2\phi}_2^{\frac{d}{2q}} \lesssim \norm{\grad \phi}_2^{1-\frac{d}{2q}}\norm{\Delta \phi}_2^{\frac{d}{2q}} = \norm{\grad \phi}_2^{1-\frac{d}{2q}}\norm{w}_2^{\frac{d}{2q}}. 
\end{equation*}
Hence, 
\begin{align*}
\frac{1}{2}\frac{d}{dt}\int \abs{\grad \phi}^2 dx & \leq -\nu \norm{w}_2^2 + C(d,\grad V)\norm{\rho_1}_q\norm{\grad \phi}_2^{2-\frac{d}{q}}\norm{w}_2^{\frac{d}{q}} + C(d,V \grad \cdot)\norm{\rho_2}_q\norm{\grad \phi}_2^{2-\frac{d}{q}}\norm{w}_2^{\frac{d}{q}}. 
\end{align*}
By weighted Young's inequality,
\begin{align*}
\frac{1}{2}\frac{d}{dt}\int \abs{\grad \phi}^2 dx & \leq -\frac{\nu}{2} \norm{w}_2^2  + \nu^{-\frac{d}{2q - d}}C(d,q,\grad V,V\grad \cdot) \left(\norm{\rho_1}^{\frac{2q}{2q - d}}_q + \norm{\rho_2}_q^{\frac{2q}{2q - d}}\right)\norm{\grad \phi}_2^{2}. 
\end{align*}
Hence, by Gr\"onwall's inequality,
\begin{equation*}
\norm{\grad \phi(t)}_2^2 \leq \norm{\grad \phi(0)}_2^2 \exp\left[\nu^{-\frac{d}{2q-d}}C(d,q,\grad V,V\grad \cdot)\int_0^t \left(\norm{\rho_1(s)}^{r}_{q} + \norm{\rho_2(s)}^{r}_{q}\right) ds\right]. 
\end{equation*} 
This proves the first (subcritical) assertion. 

The second (critical) assertion follows similarly, with modifications to deal with the criticality. 
Beginning as before, 
\begin{align*}
\frac{1}{2}\frac{d}{dt}\int \abs{\grad \phi}^2 dx & \leq -\nu\int \abs{w}^2 dx + 2\int \abs{\grad V\rho_1}\abs{\grad \phi}^2 dx + \int \rho_2 \grad \phi \cdot Vw dx  \\ 
 & \leq -\nu\int \abs{w}^2 dx + 2\norm{\grad V}_{L^{d/2} \rightarrow L^{d/2}}\norm{\rho_1}_{d/2}\norm{\grad \phi}_{\frac{2d}{d-2}}^2 \\ & \;\;\; + \norm{\rho_2}_{d/2}\norm{V\grad \cdot}_{L^{2d/(d-2)} \rightarrow L^{2d/(d-2)}}\norm{\grad \phi}_{\frac{2d}{d-2}}^2.
\end{align*}
By Sobolev embedding and the Calder\'on-Zygmund inequality again, 
\begin{align*}
\frac{1}{2}\frac{d}{dt}\int \abs{\grad \phi}^2 dx & \leq -\nu\norm{w}_2^2 + C(d)\left(\norm{\grad V}_{L^{\frac{d}{2}} \rightarrow L^{\frac{d}{2}}}\norm{\rho_1}_{\frac{d}{2}} + \norm{V\grad \cdot}_{L^{\frac{2d}{(d-2)}} \rightarrow L^{\frac{2d}{(d-2)}}}\norm{\rho_2}_{\frac{d}{2}}\right)\norm{w}_2^2. 
\end{align*}
Hence, the result follows immediately. 
\end{proof}

\section{Extension to Bounded Domains} \label{sec:BoundedDomains}
In this section we discuss how to extend Theorem \ref{thm:Hdot1Inviscid} to bounded domains with Lipschitz boundaries. 
Recall that a bounded domain $\Omega$ is a {\it Lipschitz domain} if, for every $\xi \in \d \Omega$, there is a neighborhood $U$ of $\xi$ and a Lipschitz function $A:\bR^{n-1}\rightarrow \bR$ such that,
\[U\cap \d \Omega \subseteq  T\{ (x,A(x)): x\in \bR^{n-1}\}\]
where $T$ is some linear transformation formed by a rotation and translation. 
Less regular domain boundaries may be treatable under certain conditions, but we will not consider these cases here. 
For certain PDE, the method would also require that $\Omega$ is convex or that $\Omega$ is simply connected.  

Following \cite{BertozziSlepcev10,BRB10}, the most important consideration is to replace $\dot{H}^{-1}$ with the following natural analogue: define 
\begin{align*} 
-\Delta \phi & = f- \int_\Omega f dx \\ 
\grad \phi \cdot \hat n|_{\partial \Omega} & = 0, 
\end{align*}
where $\hat n$ denotes the outward unit normal, then the $H^{-1}(\Omega)$ norm is given by
\begin{align*} 
\norm{f}_{H^{-1}(\Omega)} := \norm{\grad \phi}_{L^2(\Omega)}. 
\end{align*}  
We will need now the following three conditions, which are similar to conditions \textbf{C1-3} in \S\ref{sec:Type1and2}. 
\begin{itemize} 
\item[\textbf{B1}] The no-penetration condition: $V\rho \cdot \hat n|_{\partial \Omega} \leq 0$ for all solutions $\rho$. 
\item[\textbf{B2}] The regularity requirement that $\grad V:BMO(\Omega) \rightarrow BMO(\Omega)$ is a bounded linear operator which additionally satisfies $\norm{\grad Vf}_p \lesssim p\norm{f}_p$ for all $p$ sufficiently large.
\item[\textbf{B3}] The $H^{-1}$ stability estimate $\norm{Vf}_{2} \lesssim \norm{f}_{H^{-1}(\Omega)}$.
\end{itemize}
\begin{remark} 
All three conditions hold, for example, in the case of the 2D Euler/Navier-Stokes equations if $\Omega$ is simply connected. 
They also hold for the parabolic-elliptic Patlak-Keller-Segel model with homogeneous Neumann or Dirichlet data (the latter requiring non-negative solutions) on the chemo-attractant. 
See also \cite{BertozziSlepcev10,BRB10} for other examples, which require convexity of $\Omega$ in order to satisfy \textbf{B1}.
\end{remark} 

\begin{theorem}[$H^{-1}$ Method on Lipschitz Domains] \label{thm:Hdot1InviscidBddDomain} 
Suppose $\Omega$ is a bounded, Lipschitz domain, $V$ satisfies properties \textbf{B1-3}   
and consider the active scalar with the no-flux boundary condition, 
\begin{align*}
\rho_t + \grad \cdot (\rho V\rho) & = \Delta A(\rho), \\ 
\left(\grad A(\rho) + \rho V\rho\right) \cdot \hat n|_{\partial \Omega} & = 0, 
\end{align*}
with $A:\Real \rightarrow [0,\infty)$ non-decreasing and measurable (possibly zero). 
Suppose there exists two weak solutions $\rho_1(t),\rho_2(t)$ defined on $[0,T]$ which satisfy $\rho_i(t) \in L^{2}_t(0,T;BMO(\Omega)) \cap C_t([0,T];L^{p_0}_x(\Omega))$ where $p_0 \geq 1$. 
Then if $\rho_1(0) = \rho_2(0)$, then $\rho_1(t) = \rho_2(t)$ on $[0,T]$. 
\end{theorem}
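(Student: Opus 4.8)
The plan is to rerun the proof of Theorem~\ref{thm:Hdot1Inviscid} with three modifications: $\dot H^{-1}(\Real^d)$ is replaced by the Neumann $H^{-1}(\Omega)$ norm defined above; Lemma~\ref{lem:BMO_linear_p} is invoked in its $C$-uniform-domain form (a bounded Lipschitz domain being $C$-uniform, as noted above); and the boundary terms generated by the integrations by parts are handled using the no-flux condition, the Neumann condition $\grad\phi\cdot\hat n|_{\partial\Omega}=0$, and \textbf{B1}. Set $w:=\rho_1-\rho_2$. The no-flux boundary condition forces $\tfrac{d}{dt}\int_\Omega\rho_i=0$, hence $\int_\Omega w(t)=\int_\Omega w(0)=0$, so we may let $\phi$ be the mean-zero solution of $-\Delta\phi=w$, $\grad\phi\cdot\hat n|_{\partial\Omega}=0$; then $\|w(t)\|_{H^{-1}(\Omega)}=\|\grad\phi(t)\|_{L^2(\Omega)}$, and $\grad\phi(0)\equiv0$ since $\rho_1(0)=\rho_2(0)$. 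On a bounded domain the $\Real^2$ Hardy-space complications of Theorem~\ref{thm:Hdot1Inviscid} do not arise: $w\in L^2_tL^2_x(\Omega)\subset L^2_tH^{-1}_x(\Omega)$ already by Lemma~\ref{lem:BMO_linear_p}, so $\phi$ is well defined in all dimensions with no moment hypothesis.

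First I would record the regularity facts. By Lemma~\ref{lem:BMO_linear_p} on $\Omega$, $\rho_i\in L^2_t(0,T;L^p_x(\Omega))$ for every $p<\infty$; together with \textbf{B2} (apply $\grad V:BMO(\Omega)\to BMO(\Omega)$, then Lemma~\ref{lem:BMO_linear_p} again to $\grad V\rho_i\in BMO(\Omega)\cap L^{p_1}(\Omega)$ for a fixed large $p_1$) this gives $\grad V\rho_i\in L^2_t(BMO(\Omega)\cap L^p(\Omega))$ for all $p$, with the $L^p$ norm growing at most like a single power of $p$. Elliptic regularity for the Neumann problem on a bounded Lipschitz domain gives $\phi\in L^2_t H^1(\Omega)$, so $\phi$ is an admissible test function, together with $\grad\phi\in L^2_tL^s_x(\Omega)$ for some fixed $s>2$; one does \emph{not} in general get $\grad\phi\in L^\infty_x$ here (reentrant-corner singularities), but on a bounded domain it is enough to interpolate the high $L^p$ norms of $\grad\phi$ against this fixed $L^s$ norm rather than against $L^\infty$, which only perturbs the relevant exponents by $O(1/p)$. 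Finally \textbf{B3} gives $\|Vw\|_2\lesssim\|w\|_{H^{-1}(\Omega)}=\|\grad\phi\|_2$, and \textbf{B2} plus Sobolev embedding on $\Omega$ gives $Vw\in L^2_tL^\infty_x(\Omega)$.

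Next I would compute the evolution exactly as in Theorem~\ref{thm:Hdot1Inviscid}. Testing the weak formulation (into which the no-flux condition is built) with $\phi$,
\[
\tfrac12\tfrac{d}{dt}\int_\Omega|\grad\phi|^2
= -\int_\Omega\big(\grad(A(\rho_1)-A(\rho_2))-wV\rho_1-\rho_2Vw\big)\cdot\grad\phi
=: T1+T2+T3 .
\]
Here $T1=-\int_\Omega(A(\rho_1)-A(\rho_2))(\rho_1-\rho_2)\le0$ since $A$ is non-decreasing (the boundary term from this integration by parts vanishes because $\grad\phi\cdot\hat n=0$). For $T2$, writing $w=-\Delta\phi$ and integrating by parts as in Theorem~\ref{thm:Hdot1Inviscid}, every boundary term vanishes by $\grad\phi\cdot\hat n=0$ except $\tfrac12\int_{\partial\Omega}|\grad\phi|^2\,(V\rho_1\cdot\hat n)$, which is $\le0$ by \textbf{B1}; hence $T2\lesssim\int_\Omega|\grad V\rho_1|\,|\grad\phi|^2$. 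The term $T3=\int_\Omega\rho_2\,Vw\cdot\grad\phi$ needs no integration by parts. From here the estimates of $T2$ and $T3$ are those of Theorem~\ref{thm:Hdot1Inviscid} --- H\"older, Lemma~\ref{lem:BMO_linear_p} applied to $\grad V\rho_1$ and to $\rho_2$, the interpolation of $\|\grad\phi\|_{2p/(p-1)}$ against $\|\grad\phi\|_2$ and $\|\grad\phi\|_s$ (and of $\|Vw\|_{2p/(p-1)}$ against $\|Vw\|_2$ and $\|Vw\|_\infty$), and \textbf{B3} --- and yield
\[
\tfrac{d}{dt}\|\grad\phi(t)\|_2^2\ \le\ p\,f(t)\,\|\grad\phi(t)\|_2^{2-c/p}
\]
for all sufficiently large $p$, with $0<f\in L^{2/(1+2/p)}_t(0,T)$ and $\|f\|_{L^{2/(1+2/p)}_t}$ bounded uniformly in $p$. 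Integrating, using $\grad\phi(0)=0$, and letting $p\to\infty$ gives $\grad\phi\equiv0$ on a short interval $[0,t_0]$ with $t_0$ independent of $p$; iterating finitely many times gives $w\equiv0$ on $[0,T]$.

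The step I expect to be the main obstacle is the interface between the Lipschitz elliptic theory and the $p\to\infty$ limit: one has only $\grad\phi\in L^s_x$ for some $s>2$ rather than $\grad\phi\in L^\infty_x$, and one must check that neither this loss, nor the composition of \textbf{B2} with Lemma~\ref{lem:BMO_linear_p}, introduces a second power of $p$ that would destroy the limiting argument --- together with the bookkeeping needed to be sure that $\grad\phi\cdot\hat n|_{\partial\Omega}=0$, the no-flux condition, and \textbf{B1} really do account for \emph{all} the boundary terms. The convexity or simple-connectedness mentioned in the text is needed not for this argument but to verify \textbf{B1}--\textbf{B3} for particular operators $V$.
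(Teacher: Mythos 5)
Your proposal follows the paper's proof essentially verbatim: the same mean-zero Neumann $H^{-1}(\Omega)$ energy, the same $T1+T2+T3$ splitting with the no-flux condition, $\grad\phi\cdot\hat n|_{\partial\Omega}=0$ and \textbf{B1} absorbing the one surviving boundary term in $T2$, and the same Yudovich-type $p\to\infty$ conclusion. The single place you deviate --- interpolating $\norm{\grad\phi}_{2p/(p-1)}$ against a fixed $L^s(\Omega)$ with $s>2$ rather than against $L^\infty$ --- is actually more careful than the paper, which silently reuses the $\norm{\grad\phi}_\infty$ interpolation from Theorem \ref{thm:Hdot1Inviscid} while only recording $\grad\phi\in L^2_tL^2_x$ on a Lipschitz domain; your fix costs only an $O(1/p)$ change of exponent and so does not disturb the limiting argument.
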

\begin{remark} 
See also \cite{BertozziSlepcev10} for a similar method in a periodic setting.  
\end{remark} 
\begin{proof}
Define $w := \rho_1 - \rho_2$, $\phi$ as the unique mean-zero solution to the Neumann problem 
\begin{align*} 
-\Delta \phi & = w \\ 
\grad \phi \cdot \hat n|_{\partial \Omega} & = 0, 
\end{align*}
and note $\norm{w}_{H^{-1}(\Omega)} := \norm{\grad \phi}_{2}$. 
Similar to above, by Lemma \ref{lem:BMO_linear_p} and standard elliptic regularity, $\phi \in L^2_tL_x^\infty$ and $\grad \phi \in L_t^2L_x^2$. Note that since $\Omega$ is bounded, this holds in two dimensions without requiring any additional assumptions or the use of Lemma \ref{lem:Hardy}. 
Similar to the proof of Theorem \ref{thm:Hdot1Inviscid}, we may use $\phi$ as a test function in the definition of weak solution and compute the time evolution of $\norm{\grad \phi(t)}_2$. 
Using the homogeneous Neumann data on $\phi$ and no flux conditions on $\rho_i$, we get as in Theorem \ref{thm:Hdot1Inviscid}, 
\begin{align*}
\frac{1}{2}\frac{d}{dt}\int \abs{\grad \phi}^2 dx 
& = -\int \left(A(\rho_1) - A(\rho_2)\right)(\rho_1 - \rho_2) dx +\int \grad \phi \cdot wV\rho_1 dx + \int \grad \phi \cdot \rho_2(V\rho_1 - V\rho_2) dx \\ & := T1 + T2 + T3.   
\end{align*}
The $T1$ and $T3$ terms may be treated as in Theorem \ref{thm:Hdot1Inviscid} (although checking condition \textbf{B3} may be harder than in the $\Real^d$ case). 
Condition \textbf{B1} arises in the treatment of $T2$. Indeed, using the boundary condition on $\phi$ and \textbf{B3} in the last line, 
\begin{align*} 
T2 & = \sum_{ij} \int \partial_i \phi \partial_{ij} \phi (V\rho_1)_j + \partial_i\phi \partial_j\phi \partial_i (V\rho_1)_j dx \\
& = \sum_{ij} \int \partial_j\left(\frac{1}{2} \abs{\partial_i\phi}^2 \right) (V\rho_1)_j + \partial_i\phi \partial_j\phi \partial_i (V\rho_1)_j dx \\
 & = \sum_{ij} \int_{\partial \Omega} \left(\frac{1}{2} \abs{\partial_i\phi}^2 \right) (V\rho_1)_j \hat n_j dS - \sum_{ij} \int \left(\frac{1}{2} \abs{\partial_i\phi}^2 \right) \partial_j(V\rho_1)_j - \partial_i\phi \partial_j\phi \partial_i (V\rho_1)_j dx \\
& \lesssim \int \abs{\grad V\rho_1}\abs{\grad \phi}^2 dx. 
\end{align*}
From here the proof proceeds as in Theorem \ref{thm:Hdot1Inviscid}. 
\end{proof}

\section*{Appendix: Proof of Lemma \ref{lem:BMO_logLip}}

\begin{proof}
We assume $v \in \mathcal{S}(\Real^d)$ and use density to extend to more general functions. 
Let $x,y\in \Real^d$ and let $r = \abs{x - y}$. Define $W := B(x,r) \cap B(y,r)$. By the triangle inequality, 
\begin{equation*}
\abs{v(x) - v(y)} \leq \avint{W} \abs{v(x) - v(z)} dz + \avint{W} \abs{v(y) - v(z)} dz.
\end{equation*}
Moreover, 
\begin{equation*}
\avint{W} \abs{v(x) - v(z)}dz \leq \frac{C}{\abs{B(x,r)}}\int \abs{v(x)-v(z)}dz, 
\end{equation*}
where $C$ depends on the ratio between $\abs{B(x,r)}$ and $\abs{W}$, which is fixed in $r$. 
A standard fundamental theorem of calculus argument (see e.g. \cite{Evans} pg. 267) implies
\begin{equation}
\frac{1}{\abs{B(x,r)}}\int \abs{v(x)-v(z)}dz \lesssim \int_{B(x,r)} \frac{\abs{\grad v(z)}}{\abs{x-z}^{d-1}}dz. \label{ineq:FTC}. 
\end{equation}   
We are concerned with controlling this integral for $r \ll 1$. 
Without loss of generality we may assume $x = 0$. 
For notational simplicity, define $\grad v(y) = f(y)$ and $B_k := B(0,2^{-k})$.
Now, 
\begin{align*}
\int_{B(0,r)} \frac{\abs{f(z)}}{\abs{z}^{d-1}} dz \leq \int_{B(0,r)} \frac{\abs{f(z) - f_{B_1}}}{\abs{z}^{d-1}} dz + f_{B_1}C(d)r. 
\end{align*}
Let us now focus on the first term, as the latter term is uniformly $\mathcal{O}(r)$ by the assumption on the local averages of $\grad v$. 
By $f \in BMO$, for all $k \in \Naturals$ we have, 
\begin{equation*}
\int_{B_k}\abs{f - f_{B_k}}dx \leq c2^{-dk}\norm{f}_{BMO}. 
\end{equation*}
Moreover, as $f\in BMO$, we have
\[|f_{B_{k}}-f_{B_{k+1}}|\lesssim_{d} ||f||_{BMO},\] 
which implies $\abs{f_{B_k} - f_{B_1}} \leq ck\norm{f}_{BMO}$.
We come to the main estimate, which breaks the integral into successive length-scales,
\begin{align*}
\int_{\abs{z} < r} \frac{\abs{f - f_{B_1}}}{\abs{z}^{d-1}} dz & \sim \sum_{k \geq\log_2 r}^{\infty} \int_{\abs{z}\sim 2^{-k}}\frac{\abs{f - f_{B_1}}}{2^{-k(d-1)}} dz 
 \leq \sum_{k \geq\log_2 r}^{\infty} \int_{\abs{z}\leq 2^{-k}}\frac{\abs{f - f_{B_1}}}{2^{-k(d-1)}} dz \\
& \lesssim_{d} \norm{f}_{BMO}\sum_{k \geq\log_2 r}^{\infty} \frac{k2^{-kd}}{2^{-k(d-1)}} = \norm{f}_{BMO}\sum_{k \geq \log_2 r}k2^{-k}.  
\end{align*}
An elementary computation shows
\begin{equation*}
\sum_{k \geq \log_2 r} kx^{k-1} =  \frac{d}{dx}\sum_{k \geq \log_2 r} x^{k} = \frac{d}{dx}\frac{x^{\log_2 r}}{1 -x}. 
\end{equation*}
This finally implies, 
\begin{equation*}
\int_{B(x,r)}\frac{\grad v(z)}{\abs{z-x}^{n-1}} dz \lesssim \norm{\grad v}_{BMO}(r\abs{\log r} - r), 
\end{equation*}
for $r$ sufficiently small depending on $\avint{B(x,1)} \abs{\grad v} dx$, which is uniformly bounded. 
Hence, for $r$ sufficiently small, 
\begin{align*}
\abs{v(x)} & \leq \avint{B(x,r)}\abs{v(x) - v(y)} dy + \avint{B(x,r)}\abs{v(y)} dy \\ 
& \leq \int_{B(x,r)}\frac{\grad v(z)}{\abs{z-x}^{n-1}} dz + Cr^{d/p - d}\norm{v}_{L^p(B(x,r))} \\ 
& \lesssim \norm{\grad v}_{BMO}(r\abs{\log r} - r) + Cr^{d(p-1)/p - d}\norm{v}_p.
\end{align*}
This completes the lemma. 
\end{proof}

\section*{Acknowledgments} 
J. Bedrossian would like to thank Andrea Bertozzi, Inwon Kim, Eric Carlen and Nancy Rodr\'iguez for helpful discussions. They would also like to thank the anonymous referee, whose suggestions led to substantial improvements in the content of this paper, particularly the results of section 5.
This work was in part supported by NSF grant DMS-1103765 and NSF grant DMS-0907931. Part of this work was done while J. Azzam was visiting the Mathematical Sciences Research Institute.

\bibliographystyle{plain} \bibliography{nonlocal_eqns,dispersive}

\end{document}